\documentclass[11pt]{amsart}

\usepackage{times,amsfonts,amsmath,amstext,amsbsy,amssymb,
  amsopn,amsthm,upref,eucal, amscd, color, graphicx}
\usepackage[T1]{fontenc}
\usepackage[stretch=10]{microtype}


\usepackage{hyperref}
\usepackage{color}
\definecolor{darkred}{rgb}{0.4,0,0}
\definecolor{darkgreen}{rgb}{0,0.5,0}
\definecolor{darkblue}{rgb}{0,0,0.4}

\hypersetup{
    pdftitle={},	
    pdfauthor={Filip, Forni, Matheus},	
    pdfsubject={math.DS},		
    pdfkeywords={origami, flat surface, Lyapunov exponent},	
    pdfnewwindow=true,		
    colorlinks=true,		
    linkcolor=darkblue,		
    citecolor=darkred,		
    filecolor=darkblue,		
    urlcolor=darkblue,		
    pdfborder={0 0 0},
    breaklinks=true
}

\makeatletter
\renewcommand{\paragraph}{%
\@startsection {paragraph}{4}
{\z@} \z@ {-\fontdimen 2\font }\bfseries
}
\makeatother

\newcommand{\ip}[1]{\left\langle#1\right\rangle}
\newcommand{\Hom}{\operatorname{Hom}}
\newcommand{\Res}{\operatorname{Res}}
\newcommand{\Ind}{\operatorname{Ind}}

\usepackage{titletoc}

\let\oldtocsection=\tocsection
\let\oldtocsubsection=\tocsubsection
\let\oldtocsubsubsection=\tocsubsubsection

\renewcommand{\tocsection}[2]{\hspace{0em}\oldtocsection{#1}{#2}}
\renewcommand{\tocsubsection}[2]{\hspace{1.75em}\oldtocsubsection{#1}{#2}}
\renewcommand{\tocsubsubsection}[2]{\hspace{2em}\oldtocsubsubsection{#1}{#2}}

\def\subsek~{\S{}}

\def\equationautorefname~#1\null{%
  Equation~(#1)\null
}


%
\newtheorem{theorem}{Theorem}[section]

\numberwithin{equation}{section}

\theoremstyle{definition}
\newtheorem{definition}[theorem]{Definition}

\newtheorem{remark}[theorem]{Remark}

\newcommand{\tr}{{\text{\rm tr}\,}}

\begin{document}
\title[Exotic monodromies and quaternionic covers]{Quaternionic covers and monodromy of the Kontsevich-Zorich cocycle in orthogonal groups}

\author{Simion Filip}
\address{Simion Filip: Department of Mathematics, University of Chicago, Chicago, IL 60615, USA}
\email{sfilip@math.uchicago.edu}

\author{Giovanni Forni}
\address{Giovanni Forni: Department of Mathematics, University of Maryland, College Park, MD 20742-4015, USA}
\email{gforni@math.umd.edu}

\author{Carlos Matheus}
\address{Carlos Matheus: Universit\'e Paris 13, Sorbonne Paris Cit\'e, LAGA, CNRS (UMR 7539), F-93439, Villetaneuse, France}
\email{matheus@impa.br.}

\date{\today}

\begin{abstract}
We give an example of a Teichm\"uller curve which contains, in a factor of its monodromy, a group which was not observed before.
Namely, it has Zariski closure equal to the group $SO^*(6)$ in its standard representation; up to finite index, this is the same as $SU(3,1)$ in its second exterior power representation.

The example is constructed using origamis (i.e. square-tiled surfaces). It can be generalized to give monodromy inside the group $SO^*(2n)$ for all $n$, but in the general case the monodromy might split further inside the group.

Also, we take the opportunity to compute the multiplicities of representations in the (0,1) part of the cohomology of regular origamis, answering a question of Matheus-Yoccoz-Zmiaikou.

%
\end{abstract}

\maketitle

\tableofcontents


\section{Introduction}\label{s.introduction}

A \emph{translation surface} is the data $(M,\omega)$ of a compact Riemann surface $M$ of genus $g\geq 1$ and a non-zero Abelian differential (holomorphic $1$-form) $\omega$ on $M$.

The moduli spaces of translation surfaces possess a natural $SL(2,\mathbb{R})$-action whose dynamical features play a key role in some applications to Dynamical Systems and Algebraic Geometry such as the study of interval exchange transformations and translation flows, and the  classification of commensurability classes of ball quotients introduced by Deligne and Mostow. See, for example, the works of Masur \cite{Masur1}, Veech \cite{Veech1}, Zorich \cite{Zorich1}, Forni \cite{Forni}, Delecroix-Hubert-Leli\`evre \cite{DHL}, Athreya-Eskin-Zorich \cite{AEZ}, and Kappes-M\"oller \cite{KM} for some illustrations.

A major actor in the investigation of the dynamics of the $SL(2,\mathbb{R})$-action on moduli spaces of translation surfaces is the so-called Kontsevich-Zorich cocycle (KZ cocycle for short): for instance, the properties of the KZ cocycle are a crucial ingredient in the celebrated recent work of Eskin-Mirzakhani \cite{EM} towards the classification of $SL(2,\mathbb{R})$-invariant measures on moduli spaces of translation surfaces.


A detailed study of the KZ cocycle was initiated by Forni \cite{Forni, Fo11}.
These works are concerned with formulas for exponents, spectral gap and non-triviality of exponents, as well as applications to dynamics on translation surfaces.
Later, in \cite{Filip1} it was proved that the KZ cocycle is semisimple and its decomposition respects the Hodge structure.
Using this property, in \cite{Filip2}, an analysis of possible groups appearing in the Zariski closure of the monodromy (or the algebraic hull) was done.
Up to finite index and compact factors, the list of groups and representations is:
\begin{itemize}
\item[(i)] $Sp(2d,\mathbb{R})$ in the standard representation;
\item[(ii)] $SU_{\mathbb{C}}(p,q)$ in the standard representation;
\item[(iii)] $SU_{\mathbb{C}}(p,1)$ in an exterior power representation;
\item[(iv)] $SO^*(2n)$ in the standard representation (see \cite{Helgason} and \autoref{subsec:SOstar} for a description);
\item[(v)] $SO_{\mathbb{R}}(n,2)$ in a spin representation.
\end{itemize}
In other words, this list of five representations \emph{suffices} to account for all possibilities for the monodromy group of the KZ cocycle.

Nevertheless, our understanding of the KZ cocycle is not completely satisfactory yet: for instance, while the monodromy groups in items (i) and (ii) above appear in several families of examples in the literature (see, e.g., Eskin, Kontsevich, and Zorich \cite{EKZ2} and McMullen \cite{McMullen}), Question 5.5 of \cite{Filip2} asks whether the monodromy groups in items (iii), (iv) and (v) \emph{actually} occur in the context of the KZ cocycle
\footnote{\label{foot.2} It is known that each item of this list can be realized \emph{abstractly} as monodromy group of variations of Hodge structures over certain families of Riemann surfaces and Abelian varieties (see \cite{Filip2} for more explanations).
However, it is not clear how to convert these abstract realizations into non-compact factors of the KZ cocycle over the closure of some $SL(2,\mathbb{R})$-orbit in the moduli space of translation surfaces.}.

In this note, we give the following partial answer to this question.

\begin{theorem}\label{t.FFM} There exists an origami $\widetilde{L}$ of genus $11$ such that the restriction of the KZ cocycle over $SL(2,\mathbb{R})\cdot \widetilde{L}$ to a certain  $SL(2,\mathbb{R})$-irreducible piece of the corresponding semisimple decomposition acts (modulo finite-index subgroups) through a Zariski dense subgroup of $SO^*(6)$ in its standard representation.
\end{theorem}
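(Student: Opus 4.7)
The plan is to realize $\widetilde{L}$ as a finite regular cover $\widetilde{L}\to L$ of a simpler origami whose deck group $G$ carries an irreducible representation $\rho$ of quaternionic type. The desired $SO^*(6)$ monodromy will then appear on the $\rho$-isotypic component $W \subset H^1(\widetilde{L},\mathbb{C})$ by a combination of three facts: (a) the quaternionic structure on $\rho$ descends to a quaternionic structure on $W$ compatible with the Hodge filtration of $\widetilde{L}$; (b) by the classification in \cite{Filip2}, the Zariski closure of the restricted monodromy must come from the short list (i)--(v) above, and the quaternionic structure together with the target complex dimension $6=\dim_{\mathbb{C}} SO^*(6)_{\mathrm{std}}$ singles out case (iv); and (c) the full group $SO^*(6)$ is actually attained. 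The relevant accidental isogeny between $SO^*(6)$ and $SU(3,1)$ in its second exterior power representation suggests that the construction should exhibit $W$ as an exterior square of a smaller four-dimensional $SU(3,1)$-piece pulled back from $L$, so the guiding choice is to pick $L$ so that its KZ cocycle already has an $SU(3,1)$-type piece (as in the McMullen or Eskin--Kontsevich--Zorich cyclic-cover examples).

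The concrete steps are as follows. First, fix the pair $(L,G)$ and apply the Chevalley--Weil formula for regular covers of origamis to write down the $G$-isotypic decomposition of $H^1(\widetilde{L},\mathbb{C})$, verifying that the $\rho$-component $W$ has complex dimension $6$, is $SL(2,\mathbb{R})$-irreducible, and does not split further as a direct sum of $G$-invariant Hodge substructures. Second, carry out a Hodge-theoretic computation on $W$ to pin down the Hodge type $(p,q)$ and the signature of the polarization, checking that these are exactly those forcing $SO^*(6)$ (rather than a compact real form or a different non-compact form) as the only admissible group in item (iv). Third, establish Zariski density by exhibiting a small set of affine automorphisms of $\widetilde{L}$, typically parabolics coming from Dehn twists in the horizontal and vertical cylinder decompositions, whose images on $W$ generate a subgroup whose Lie algebra is all of $\mathfrak{so}^*(6)$.

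The main obstacle I expect is this last step. By item (iv) of the classification one knows a priori that the Zariski closure is contained in $SO^*(6)$, but excluding each proper reductive algebraic subgroup stabilizing both the quaternionic structure and the Hodge filtration requires an honest six-dimensional matrix calculation, and the parabolic generators must be chosen carefully so that no unexpected invariant subspace or non-trivial centralizer appears. A secondary difficulty, flagged by the abstract, is ensuring the piece $W$ does not split further as a $G$-invariant Hodge substructure: this is essentially the reason the theorem is stated for $SO^*(6)$ rather than for the whole family $SO^*(2n)$, since for larger $n$ the analogous quaternionic cover exists but the relevant piece tends to decompose into smaller quaternionic summands and the monodromy fills up only a proper subgroup of $SO^*(2n)$.
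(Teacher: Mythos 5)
Your overall blueprint — take a regular cover with a quaternionic deck-group representation $\rho$, decompose the first cohomology into isotypic components, show the $\rho$-component has quaternionic rank $3$, invoke the classification from \cite{Filip2} to constrain the monodromy to $SO^*(6)$, and then verify irreducibility — matches the structure of the actual proof. But two parts of your plan misfire.

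First, the heuristic about exterior squares would lead you astray. You propose to pick the base $L$ so that its KZ cocycle already has an $SU(3,1)$-type piece, and then get $W$ as a $\Lambda^2$ pulled back from $L$. The actual construction does nothing of the sort: the base $L^0$ is the three-square $L$-shaped origami in $\mathcal{H}(2)$, a Teichm\"uller curve whose monodromy is a pair of $SL(2,\mathbb{R})$'s and carries no $SU(3,1)$-piece anywhere. The exceptional isogeny $\mathfrak{so}^*(6)\cong\mathfrak{su}_{3,1}$ is an \emph{a posteriori} re-reading of the answer, not a mechanism of construction. The $SO^*(6)$ arises simply because the $2$-dimensional quaternionic irreducible of the quaternion group $Q$ occurs with multiplicity $3$ in $H_1^{(0)}(\widetilde{L},\mathbb{R})$, and the structure theory of local systems with finite symmetry then forces the monodromy into $SO^*(6)\subset GL_3(\mathbb{H})$. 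There is no exterior power of a smaller VHS involved. Pursuing your heuristic would have you looking for the wrong kind of base surface. Similarly, your proposed ``Hodge-theoretic computation to pin down the signature'' is unnecessary: once the quaternionic multiplicity is $3$, the target group is $SO^*(6)$ by structure theory and the classification; the only question is whether the monodromy fills it or lands in a proper subgroup from a further Hodge splitting.

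Second — and this is the genuine gap — your final step is the hard part, and ``exhibit parabolics whose images generate $\mathfrak{so}^*(6)$'' is not a workable plan as stated. Directly verifying that a handful of unipotents generate a $15$-dimensional real Lie algebra, while also controlling all centralizers and invariant subspaces, is a substantial computation with no guarantee of termination. The paper avoids this by first using the Filip/M\"oller semisimplicity theorem together with the classification to reduce to a clean trichotomy: the monodromy Zariski closure is $SO^*(6)$, or lies in $SO^*(4)\times SO^*(2)$, or lies in $SO^*(2)^3$. The last case is killed by the Eskin--Kontsevich--Zorich formula, which gives a nonzero Lyapunov exponent $\lambda=1/6$ on $W_{\chi_2}$. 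The middle case is killed by a very short computation: two explicit Dehn multitwists are shown to act on $W_{\chi_2}$ with ``simple quaternionic spectrum'' (three eigenvalues, each of multiplicity four), and their central $4$-dimensional eigenspaces are different; if the $SO^*(4)\times SO^*(2)$ splitting existed, the $SO^*(2)$-factor would force both central eigenspaces to coincide with the $V\cong\chi_2$ summand. This dichotomy-plus-eigenspace trick is much cheaper than trying to fill the Lie algebra directly, and it is what makes the proof feasible. You correctly flag this step as the main obstacle, but your proposal leaves it open.

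One more small point: the cover $\widetilde{L}\to\mathbb{T}^2$ is \emph{not} regular (it is a $24$-square origami with automorphism group $Q$ of order $8$), so the Chevalley--Weil/Section~3 formulas for regular origamis do not apply directly; the paper instead decomposes $H_1(\widetilde{L},\mathbb{R})$ by hand using the tower of intermediate covers $\widetilde{L}\to L_{\pm}\to L_{\langle\ast\rangle}\to L^0$.
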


\begin{remark}
This result says that a particular case of item (iv) above occurs for the KZ cocycle over the $SL(2,\mathbb{R})$-action on moduli spaces of translation surfaces.
In fact, we have an exceptional isomorphism of real Lie algebras $\mathfrak{so}^*(6)\cong \mathfrak{su}_{3,1}$.
This can be seen, for instance, by comparing the Satake diagrams in the back of \cite{Vinberg}.

Moreover, letting $\mathfrak{so}^*(6)$ act in its standard representation identifies it with $\mathfrak{su}_{3,1}$ acting in the second exterior power of its standard representation.
The underlying vector space is $\mathbb{C}^6$ viewed as $\mathbb{R}^{12}$.

In particular, up to finite center, the groups $SO^*(6)$ and $SU(3,1)$ are isomorphic.
This means that the example from \autoref{t.FFM} is, in fact, also an example of $SU(3,1)$ acting in its second exterior power (thus, of item (iii) in the list above).
\end{remark}

\begin{remark}


In principle, the computations from \autoref{sec:multiplicities} as well as the discussion in \autoref{sec:monodromy} suggest how to find further examples of pieces of the monodromy in $SO^*(2n)$ for any $n$.
Namely, one can first look for quarternionic representations occurring in the cohomology of some regular origami (or more general family of translation surfaces with symmetries).
The multiplicity $n$ of the representation will constrain the monodromy to lie inside $SO^*(2n)$.
One then has to check that the monodromy is irreducible, i.e. that the cocycle does not split further.
Note that the computations of Matheus, Yoccoz, and Zmiaikou \cite{MYZ} give rise to a large number of explicit examples.

On the other hand, the question of finding examples of $SL(2,\mathbb{R})$-orbits in moduli spaces of translation surfaces whose associated KZ cocycles have monodromy groups with non-compact factors realizing all cases in items (iii) or (v) seems more challenging in our opinion (cf. \autoref{foot.2}).
\end{remark}

Regarding the multiplicities of representations for the cohomology of regular origamis, we have the next result.
\begin{theorem}
 Let $S$ be a regular origami, determined by a group $G$ and the two generator $h,v\in G$ (see \autoref{s.preliminaries} for definitions).
 Let $c:=hvh^{-1}v^{-1}$ be their commutator, and let $\pi$ be a complex irreducible representation of $G$.
 Then in the cohomology group $H^1(S;\mathbb{C})$, the representation $\pi$ appears (see \autoref{eq:top_mult}) with multiplicity
 \begin{align*}
 2 \delta_{\pi=triv} + \dim \pi - m_0
 \end{align*}
 Here $\delta_{\pi=triv}$ is a constant equal to $1$ if $\pi$ is trivial and zero otherwise, while $m_0$ denotes the dimension of the vector subspace in $\pi$ fixed by the element $c$.
 
 In the cohomology group $H^{0,1}(S)$, the representation $\pi$ appears (see \autoref{eq:holom_mult}) with multiplicity
 $$
 \delta_{\pi=triv} + \frac 12 (\dim \pi - m_0) + \frac 1N \left( \sum_{i=1}^{N-1} \left(i-\frac N2\right)m_i \right)
 $$
 Here $m_i$ denotes the dimension of the space in the representation $\pi$ on which the element $c$ acts with eigenvalue $\exp(2\pi \sqrt{-1} \frac i N)$, while $N$ is the order of $c$ in $G$.
\end{theorem}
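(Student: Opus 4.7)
The plan is to exploit the structure of a regular origami $S$ as a Galois cover $\pi : S \to \mathbb{T}^2$ of the flat torus with deck group $G$, branched over the single image point of the square corners, whose local monodromy is the commutator $c = hvh^{-1}v^{-1}$ of order $N$. Both formulas will then fall out by standard equivariant machinery.

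For the multiplicity in $H^1(S;\mathbb{C})$ I would argue via equivariant Euler characteristic on the natural CW decomposition by squares. The $|G|$ faces and $2|G|$ horizontal/vertical edges carry free $G$-actions, so $C_2 \cong \mathbb{C}[G]$ and $C_1 \cong \mathbb{C}[G]^{\oplus 2}$ as $G$-modules, while the vertices form a single $G$-orbit of cardinality $|G|/N$ with stabilizer $\langle c\rangle$, giving $C_0 \cong \mathrm{Ind}_{\langle c\rangle}^G \mathbb{C}$. Since $H^0(S;\mathbb{C})$ and $H^2(S;\mathbb{C})$ are both trivial $G$-representations (the second because $G$ acts by orientation-preserving homeomorphisms), taking the $\pi$-isotypic Euler characteristic gives
\[
  m(\pi, H^1) \;=\; 2\,\delta_{\pi=triv} \;-\; \chi_G^\pi(S) \;=\; 2\,\delta_{\pi=triv} + \dim\pi - m_0,
\]
where the second equality uses Frobenius reciprocity $\langle \mathrm{Ind}_{\langle c\rangle}^G \mathbb{C},\pi\rangle = \dim \pi^{\langle c\rangle} = m_0$ together with $\langle \mathbb{C}[G],\pi\rangle = \dim\pi$.

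For the multiplicity in $H^{0,1}(S)$ I would invoke the classical Chevalley--Weil formula for the $G$-representation on holomorphic one-forms of a branched Galois cover. Applied here with a single branch point of monodromy $c$ of order $N$ over a base of genus one (so the $(g_Y-1)\dim\pi$ term vanishes), it yields
\[
  \dim H^{1,0}(S)^\pi \;=\; \delta_{\pi=triv} \;+\; \sum_{k=1}^{N-1} \frac{k}{N}\, m_k ,
\]
and the Hodge symmetry $H^{0,1}\cong\overline{H^{1,0}}$ as $G$-representations (with the substitution $m_k \leftrightarrow m_{N-k}$ on passing to the conjugate representation) provides the companion expression for $H^{0,1}$. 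Consistency check: the two pieces sum to $2\delta + \dim\pi - m_0$, matching Step~1. Rearranging the relevant sum gives exactly the claimed form $\delta_{\pi=triv} + \tfrac12(\dim\pi-m_0) + \tfrac1N\sum_{i=1}^{N-1}(i-\tfrac{N}{2})m_i$.

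The genuinely delicate point is fixing the orientation conventions so that Chevalley--Weil lands on $H^{0,1}$ rather than $H^{1,0}$: this amounts to choosing consistently which loop around the branch point represents $c$ versus $c^{-1}$, and which of the two Hodge pieces is ``holomorphic'' with respect to the complex structure induced by the flat metric on the origami. Once that bookkeeping is settled, everything else is routine arithmetic combined with the two standard inputs (equivariant Euler characteristic and Chevalley--Weil).
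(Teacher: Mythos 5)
Your first step---computing the $H^1$ multiplicity via the $\pi$-isotypic Euler characteristic of the cellular chain complex (free $G$-action on faces and edges, $C_0\cong\operatorname{Ind}_{\langle c\rangle}^G\mathbb{C}$ for the vertices, $H^0$ and $H^2$ trivial)---is correct and is a genuinely different route from the paper's, which instead evaluates the character of $H^1$ by the topological Lefschetz fixed-point formula at each $g\in G$ and then applies Frobenius reciprocity. Your method is a bit more elementary and avoids appealing to Lefschetz for the topological part; both give $\chi^\pi_G(S)=m_0-\dim\pi$ and hence the stated formula.

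The second step, however, has a real unresolved gap, and it is precisely the ``delicate point'' you flagged. The Chevalley--Weil formula you wrote, $\dim H^{1,0}(S)^\pi=\delta_{\pi=\mathrm{triv}}+\sum_{k=1}^{N-1}\tfrac{k}{N}m_k$, is \emph{not} consistent with the target formula once you apply the Hodge-symmetry substitution $m_k\leftrightarrow m_{N-k}$: you would then obtain
\[
\dim H^{0,1}(S)^\pi = \delta_{\pi=\mathrm{triv}} + \sum_{k=1}^{N-1}\frac{N-k}{N}m_k = \delta_{\pi=\mathrm{triv}} + (\dim\pi - m_0) - \frac1N\sum_{k=1}^{N-1}k\,m_k,
\]
whereas the paper's formula simplifies (using $\sum_{i\ge 1}m_i=\dim\pi-m_0$) to $\delta_{\pi=\mathrm{triv}}+\tfrac1N\sum_{i=1}^{N-1}i\,m_i$. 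These agree only when $\dim\pi-m_0=\tfrac2N\sum i\,m_i$, which fails already for one-dimensional representations on which $c$ acts by a nontrivial root of unity. In other words, the expression you wrote down for $H^{1,0}$ is in fact the correct one for $H^{0,1}$: the labels are flipped. Your proposed consistency check (that the two Hodge pieces sum to the $H^1$ multiplicity) is invariant under exactly this flip, so it cannot detect the error. To close the gap one must pin down the normalization, and this is precisely what the paper does by computing the local multiplier $\mu_p(g)=\exp(2\pi\sqrt{-1}\,k_p/N)$ directly from the conformal structure of the origami near a cone point (a deck transformation $c$ rotates the local coordinate by $2\pi/N$), rather than quoting Chevalley--Weil as a black box with an unspecified orientation convention. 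The subsequent generating-function manipulation in the paper then plays the role of your ``routine arithmetic,'' but with the sign correctly anchored.
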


For regular origamis, the previous theorem answers a question posed in Remark 5.13 of \cite{MYZ}. 

\paragraph{Paper outline.}
In \autoref{s.preliminaries} we recall some basic facts about origamis.
Next, in \autoref{sec:multiplicities} we explicitly compute the multiplicities of representations for regular origamis.
This extends the computations in \cite{MYZ} to the multiplicities in the $(1,0)$ and $(0,1)$ parts of the Hodge decomposition.
In \autoref{sec:monodromy}, we discuss the general structure of (semisimple) local systems with symmetries.
We also include a more detailed description of the group $SO^*(2n)$.

The construction of the example and the proof of \autoref{t.FFM} occupy the remainder of the note.
We introduce in \autoref{s.quaternion-cover} the origami $\widetilde{L}$ (the main object of this note), and we determine the Lyapunov spectrum of the KZ cocycle over $SL(2,\mathbb{R})\cdot\widetilde{L}$.
In particular, the structure of the Lyapunov spectrum allows us to show that the monodromy group of the KZ cocycle over
$SL(2,\mathbb{R})\cdot\widetilde{L}$ has a non-compact factor isomorphic to either $SO^*(4)$ or $SO^*(6)$ (modulo compact and finite-index subfactors), cf. \autoref{t.FFM1} below.
Finally, in \autoref{s.irreducibility-exotic-monodromy} we complete the proof of \autoref{t.FFM} by ruling out the possibility in \autoref{t.FFM1} of a $SO^*(4)$ monodromy through the computation of certain matrices\footnote{I.e., the actions on the homology group $H_1(\widetilde{L},\mathbb{R})$ of certain affine homeomorphisms of $\widetilde{L}$.} of the KZ cocycle over $SL(2,\mathbb{R})\cdot\widetilde{L}$.

\section{Preliminaries}\label{s.preliminaries}

In this section, we recall some useful facts about origamis (square-tiled surfaces) and the $SL(2,\mathbb{R})$-action on the moduli spaces of translation surfaces. For this sake, we will loosely follow the exposition in \cite{MYZ} for a large portion of this section.

\subsection{Origamis}
\label{subsec:origamis}
An \emph{origami} (or \emph{square-tiled surface}) is a translation surface $X=(M,\omega)$ such that the Riemann surface $M$ is obtained by a finite covering $\pi:M\to\mathbb{T}^2$ of the torus $\mathbb{T}^2=\mathbb{R}^2/\mathbb{Z}^2$ which is unramified off $0\in\mathbb{T}^2$, and the (non-zero) Abelian differential $\omega$ on $M$ is the pullback $\omega=\pi^*(dz)$ of the Abelian differential $dz$ on $\mathbb{C}/(\mathbb{Z}\oplus i\mathbb{Z})\simeq \mathbb{T}^2$.

Alternatively, an origami is a translation surface $X=(M,\omega)$ is determined by a pair of permutations $h,v\in S_N$ of the set $\{1,\dots, N\}$ through the following recipe. We take $N$ copies $sq_1,\dots, sq_N$ of the unit square $[0,1]^2\subset\mathbb{R}^2$, and, for each $n\in\{1,\dots, N\}$, we glue by translation the rightmost vertical side of $sq_n$ to the leftmost vertical side of $sq_{h(n)}$, resp. the topmost horizontal side of $sq_n$ to the bottommost horizontal side of $sq_{v(n)}$. In this way, after performing these identifications, we obtain a Riemann surface $M$ that is naturally equipped with an Abelian differential $\omega$ given by the pullback of $dz$ on each $sq_n$ (observe that this makes sense because the identifications are given by translations on $\mathbb{R}^2$). Note that the translation surface $X=(M,\omega)$ associated to a pair of permutations $h,v\in S_N$ is connected if and only if the group generated by $h$ and $v$ acts transitively on $\{1,\dots, N\}$.

These two definitions of origamis are related as follows. On one hand, given a translation surface $X=(M,\omega)$ associated to a pair of permutations $h,v\in S_N$, the natural projection from each $S_n$, $n=1,\dots, N$, to $\mathbb{T}^2$ induces a finite covering $\pi:M\to \mathbb{T}^2$ which is unramified off $0\in\mathbb{T}^2$ such that $\omega=\pi^*(dz)$. On the other hand, given a finite covering $\pi:M\to\mathbb{T}^2$ of degree $N$ unramified off $0\in\mathbb{T}^2$, the closures in $M$ of each  of the $N$ connected components of $\pi^{-1}((0,1)^2)$ determine $N$ copies of the unit square $[0,1]^2$. After numbering $sq_1,\dots, sq_N$ the connected components of $\pi^{-1}((0,1)^2)$ in some arbitrary way, we obtain a pair of permutations $h,v\in S_N$ such that, for each $n=1,\dots, N$, the neighbor to the right, resp. on the top, of $sq_{n}$ is $sq_{h(n)}$, resp. $sq_{v(n)}$.

\begin{remark}\label{r.simultaneous-conjugation} Observe that the particular choice of numbering of the connected components of $\pi^{-1}((0,1)^2)$ is not relevant from the point of view of translation surfaces: in other terms, by replacing a pair of permutations $h, v\in S_N$ by another pair $\phi h \phi^{-1}, \phi v\phi^{-1}\in S_N$ of permutations that are simultaneously conjugated to $h,v$, we obtain the same origami $X=(M,\omega)$.
\end{remark}

\subsection{\texorpdfstring{$SL(2,\mathbb{R})$}{Sl2}-action on strata of moduli spaces of translation surfaces}
Let $X = (M,\omega)$ be a translation surface of genus $g\geq1$. Since $\omega$ is a non-zero Abelian differential (by definition), we have that the set $\Sigma$ of zeroes of $\omega$ is finite, and, by Riemann-Roch theorem, the list $\kappa=(k_1,\dots,k_{\sigma})$, $\sigma=\#\Sigma$, of orders of zeroes of $\omega$ satisfy
$$\sum\limits_{l=1}^{\sigma} k_l = 2g-2.$$

Given a finite subset $\Sigma\subset M$ and a list $\kappa=(k_1,\dots,k_{\sigma})$, $\sigma=\#\Sigma$, of natural numbers satisfying $\sum\limits_{l=1}^{\sigma} k_l = 2g-2$, the corresponding \emph{stratum} $\mathcal{H}(\kappa)$, resp. $\mathcal{T}(\kappa)$, of the \emph{moduli space}, resp. \emph{Teichm\"uller space, of translation surfaces} of genus $g$ is the space of orbits of translation surfaces $X=(M,\omega)$ such that $\omega|\Sigma=0$ and the list of orders of zeroes of $\omega$ is $\kappa$ under the natural action of the group $\textrm{Homeo}^+(M,\Sigma,\kappa)$, resp. $\textrm{Homeo}_0(M,\Sigma,\kappa)$, of orientation-preserving homeomorphisms of $M$ that preserve $\Sigma$ and $\kappa$, resp. the connected component in $\textrm{Homeo}^+(M,\Sigma,\kappa)$ of the identity element. Note that $\mathcal{H}(\kappa)=\mathcal{T}(\kappa)/\Gamma(M,\Sigma,\kappa)$ where $\Gamma(M,\Sigma,\kappa):=\textrm{Homeo}^+(M,\Sigma,\kappa) / \textrm{Homeo}_0(M,\Sigma,\kappa)$ is the so-called \emph{mapping class group}
 of isotopy classes of orientation-preserving homeomorphisms of $M$ respecting $\Sigma$ and $\kappa$.

The group $SL(2,\mathbb{R})$ acts on $\mathcal{H}(\kappa)$ as follows. A translation surface $X=(M,\omega)\in \mathcal{H}(\kappa)$ is determined by the so-called \emph{translation charts} provided by the local primitives of $\omega$ on $M-\Sigma$. Given $g\in SL(2,\mathbb{R})$ and $X=(M,\omega)\in\mathcal{H}(\kappa)$, we define $g(X)$ as the translation surface obtained by post-composition of the translation charts of $X=(M,\omega)$ with $g$. In this setting, the action of the diagonal subgroup $g_t=\textrm{diag}(e^t, e^{-t})$ of $SL(2,\mathbb{R})$ on strata of moduli spaces of translation surfaces is called \emph{Teichm\"uller geodesic flow}.

\subsection{Veech groups and affine homeomorphisms}

The $SL(2,\mathbb{R})$-orbits of ori\-gamis are particular examples of closed $SL(2,\mathbb{R})$-orbits in the strata of the moduli spaces of translation surfaces.

The geometry of the $SL(2,\mathbb{R})$-orbit of an origami $(M,\omega)$ admits the following simple description. Let $SL(M,\omega)$ be the \emph{Veech group} of $(M,\omega)$, i.e., the stabilizer of $(M,\omega)$ with respect to the $SL(2,\mathbb{R})$-action. The Veech group $SL(M,\omega)$ of an origami $(M,\omega)$ is commensurable to $SL(2,\mathbb{Z})$. In particular, if $(M,\omega)$ is an origami, the hyperbolic surface $\mathbb{H}/SL(M,\omega)$ has finite area. As it turns out, the $SL(2,\mathbb{R})$-orbit $SL(2,\mathbb{R})\cdot (M,\omega)$ of an origami $(M,\omega)$ is naturally isomorphic to the unit cotangent bundle $SL(2,\mathbb{R})/SL(M,\omega)$ of a finite-area hyperbolic surface.

The Veech group $SL(M,\omega)$ relates to the flat geometry of translation surfaces via the notion of \emph{affine homeomorphisms}. The group $\textrm{Aff}(M,\omega)$ of affine homeomorphisms of $X=(M,\omega)$ consists of all orientation-preserving homeomorphisms of $M$ (respecting $\Sigma$) whose expressions in translation charts (of local primitives of $\omega$ on $M-\Sigma$) are affine transformations of $\mathbb{R}^2$. By extracting the linear part of these affine transformations, we obtain a homomorphism from $\textrm{Aff}(X)$ to $SL(2,\mathbb{R})$ whose kernel $\textrm{Aut}(X)$ is the so-called \emph{automorphism group} of $X=(M,\omega)$. Moreover, this homomorphism fits into an exact sequence
$$\{\textrm{Id}\}\to \textrm{Aut}(X)\to \textrm{Aff}(X)\to SL(X)\to\{\textrm{Id}\}$$
In particular, we have that the Veech group $SL(X)$ is the subgroup of $SL(2,\mathbb{R})$ capturing all linear parts of affine homeomorphisms of $X$.

The group $\textrm{Aff}(X)$ of affine homeomorphisms is the stabilizer in the mapping class group $\Gamma(M,\Sigma,\kappa)$ of the $SL(2,\mathbb{R})$-orbit of
$X=(M,\omega)$.

\subsection{KZ cocycle over the \texorpdfstring{$SL(2,\mathbb{R})$}{SL2R}-orbit of an origami} The \emph{Kontsevich-Zorich cocycle} over the $SL(2,\mathbb{R})$-orbit of a translation surface $X=(M,\omega)$ is the quotient of the trivial cocycle
$$SL(2,\mathbb{R})\cdot X \times H_1(M,\mathbb{R})\to SL(2,\mathbb{R})\cdot X \times H_1(M,\mathbb{R})$$
(over the tautological dynamics of $SL(2,\mathbb{R})$ on $SL(2,\mathbb{R})\cdot X$) by the natural action (on both factors of the trivial cocycle) of the stabilizer $\textrm{Aff}(X)$ in the mapping class group of the $SL(2,\mathbb{R})$-orbit of $X$ in a stratum of the moduli space of translation surfaces. Equivalently, the KZ cocycle over $SL(2,\mathbb{R})\cdot X\simeq SL(2,\mathbb{R})/SL(X)$ acts on $H_1(M,\mathbb{R})$ via appropriate elements of $\textrm{Aff}(X)$.

Suppose from now on that $X=(M,\omega)$ is an origami and let $\pi:M\to\mathbb{T}^2$ be the associated finite ramified covering. In this case, the KZ cocycle respects the following decomposition into ($\textrm{Aff}(X)$-invariant) subspaces defined over $\mathbb{Q}$:
$$H_1(M,\mathbb{R}) = H_1^{st}(M,\mathbb{R})\oplus H_1^{(0)}(M,\mathbb{R}),$$
where $H_1^{(0)}(M,\mathbb{R})$ is the kernel of $\pi_*: H_1(M,\mathbb{R})\to H_1(\mathbb{T}^2,\mathbb{R})$, and $H_1^{st}(M,\mathbb{R})$ is the symplectic orthogonal of $H_1^{(0)}(M,\mathbb{R})$ with respect to the usual (symplectic) intersection form.

The subspace $H_1^{st}(M,\mathbb{R})$ is naturally identified with $\mathbb{R}^2\simeq H_1(\mathbb{T}^2,\mathbb{R})$ because it is generated by the absolute homology classes $\sigma=\sum\limits_s \sigma_s$ and $\zeta=\sum\limits_s \zeta_s$ where $s$ runs through the set $Sq(X)$ of squares of $X$ (i.e., the closures of connected components of $\pi^{-1}((0,1)^2)$), and $\sigma_s$, resp. $\zeta_s$, is the bottommost, resp, leftmost, horizontal, resp. vertical, side of $s$. Furthermore, under this identification,
the affine group $\textrm{Aff}(X)$ acts on $H_1^{st}(M,\mathbb{R})\simeq\mathbb{R}^2$ via the composition of the homomorphism $\textrm{Aff}(X)\to SL(X)$ and the standard action of $SL(X)\subset SL(2,\mathbb{R})$ on $\mathbb{R}^2$. In other terms, the monodromy group of the restriction of the KZ cocycle to $H_1^{st}(M,\mathbb{R})$ is $SL(2,\mathbb{R})$ (up to finite-index).

Denoting by $G = \textrm{Aut}(X)$ the (finite) group of automorphisms of the origami $X=(M,\omega)$, we have that the subspaces $H_1^{st}(M,\mathbb{R})$ and $H_1^{(0)}(M,\mathbb{R})$ are $G$-modules. In particular, we can write
$$H_1^{(0)}(M,\mathbb{R}) = \bigoplus\limits_{a\in \textrm{Irr}_{\mathbb{R}}(G)} W_a$$
where $\textrm{Irr}_{\mathbb{R}}(G)$ is the set of (isomorphism classes of) irreducible representations of $G$, and $W_a$ is the isotypical component of $a\in\textrm{Irr}_{\mathbb{R}}(G)$ in the $G$-module $H_1^{(0)}(M,\mathbb{R})$.

The isotypical components $W_a$, $a\in\textrm{Irr}_{\mathbb{R}}(G)$, of $H_1^{(0)}(M,\mathbb{R})$ are permuted by the group $\textrm{Aff}(X)$ of affine homeomorphisms of the origami $X=(M,\omega)$. More precisely, $\textrm{Aff}(X)$ acts on $G=\textrm{Aut}(X)$ by conjugation, that is, we have a natural homomorphism $\textrm{Aff}(X)\to\textrm{Aut}(G)$. Next, we recall that one has a canonical homomorphism $\textrm{Aut}(G)\to \textrm{Out}(G):= \textrm{Aut}(G)/\textrm{Inn}(G)$ where $\textrm{Inn}(G)$ is the (normal) subgroup of inner automorphisms of $G$. Furthermore, $\textrm{Out}(G)$ acts on $\textrm{Irr}_{\mathbb{R}}(G)$. In this context, the composition $\textrm{Aff}(X)\to \textrm{Aut}(G)\to \textrm{Out}(G)$ of the two homomorphisms described above induces an action of $\textrm{Aff}(X)$ on $\textrm{Irr}_{\mathbb{R}}(G)$ such that the elements of $\textrm{Aff}(X)$ permute the isotypical components $W_a$ according to this action (i.e., $A(W_a)=W_{A\cdot a}$ for every $A\in\textrm{Aff}(X)$ and $a\in\textrm{Irr}_
{\mathbb{R}}(G)$).

Denote by $\textrm{Aff}_{**}(X)$ the kernel of the homomorphism $\textrm{Aff}(X)\to\textrm{Aut}(G)$. By definition, $\textrm{Aff}_{**}(X)$ is a finite-index subgroup of $\textrm{Aff}(X)$ such that the elements of $\textrm{Aff}_{**}(X)$ fix the isotypical components $W_a$ of $H_1^{(0)}(M,\mathbb{R})$, and, moreover, the restriction of these elements to each $W_a$ are automorphisms of $G$-module (because the elements of $\textrm{Aff}_{**}(X)$ commute with $G=\textrm{Aut}(X)$). Since the isotypical components $W_a$ are mutually orthogonal with respect to the symplectic intersection form on $H_1(M,\mathbb{R})$ and the restriction to each $W_a$ of the intersection form is also symplectic form, we deduce that the following restriction on the monodromy group of the KZ cocycle: the elements of $\textrm{Aff}_{**}(X)$ act via a subgroup of the product of the groups $Sp(W_a)$ of automorphisms of the $G$-modules $W_a$ preserving the symplectic intersection form.

\begin{remark}\label{r.semisimplicity} In principle, the action of $\textrm{Aff}_{**}(X)$ in a given isotypical component $W_a$ might not be irreducible. In this case, M\"oller \cite{Moller} (see also \cite{Filip1} for a more general version) showed the following ``Deligne's semisimplicity theorem'': $W_a$ can be further decomposed into $\textrm{Aff}_{***}(X)$-irreducible  symplectic subspaces (respecting the so-called Hodge structure of $H^1(M,\mathbb{R})$), where $\textrm{Aff}_{***}(X)$ is an appropriate finite-index subgroup of $\textrm{Aff}_{**}(X)$.
\end{remark}

\begin{definition}
  The \emph{monodromy} of the Kontsevich-Zorich cocycle is the image of the group $\textrm{Aff}(X)$ inside the product of the symplectic groups $Sp(W_a)$.
  Since the groups $\textrm{Aff}_{**}(X)$ and $\textrm{Aff}_{***}(X)$ are finite-index in $\textrm{Aff}(X)$, they map with finite index to the monodromy.
  
  Therefore, the connected component of the Zariski closure of the monodromy is independent of which group in the family $\textrm{Aff}_\bullet(X)$ we consider for the action.
\end{definition}

The long-term behavior of KZ cocycle (action of $\textrm{Aff}(X)$ on homology) is described by its \emph{Lyapunov spectrum}. More precisely, the $SL(2,\mathbb{R})$-orbit of an origami $X=(M,\omega)$ is isomorphic to $SL(2,\mathbb{R})/SL(X)$ where $SL(X)$ is commensurable to $SL(2,\mathbb{Z})$. Thus, $SL(2,\mathbb{R})\cdot X$ carries an unique $SL(2,\mathbb{R})$-invariant ergodic probability measure $\mu$. In this setting, Oseledets theorem says that, for $\mu$-almost every $x\in SL(2,\mathbb{R})\cdot X$, the sequence $(A_n)_{n\in\mathbb{Z}}\subset \textrm{Aff}_{**}(X)$ of matrices of the KZ cocycle (depending on $x$) along the orbit $g_t\cdot x$ of $x$ under the Teichm\"uller geodesic flow $g_t = \textrm{diag}(e^{t}, e^{-t})$ has the following asymptotic behavior: for all $v\in H_1(M,\mathbb{R})-\{0\}$, we have well-defined quantities
$$\lim\limits_{n\to\pm\infty} \frac{\log\|A_n(v)\|}{\log\|A_n\|} = \theta$$
independently of the choice of the norm $\|.\|$ on $H_1(M,\mathbb{R})$; furthermore, the collection of numbers $\theta$ obtained in this way is a finite list of numbers (with multiplicities) $\{\theta_1\geq\theta_2\geq\dots\geq\theta_{2g}\}$ of cardinality $2g=\textrm{dim}_{\mathbb{R}}H_1(M,\mathbb{R})$ which is independent of $x$. In the literature, the list $\{\theta_1\geq\dots\geq\theta_{2g}\}$ is called Lyapunov spectrum, the quantities $\theta_{\alpha}$ are called \emph{Lyapunov exponents}, and
$$E(\theta_{\alpha}, x):=\left\{v\in H_1(M,\mathbb{R})-\{0\}: \lim\limits_{n\to\pm\infty} \frac{\log\|A_n(v)\|}{\log\|A_n\|} = \theta_{\alpha} \right\}$$
are called \emph{Oseledets subspaces}\footnote{These subspaces depend measurably on $x$.}.

The Lyapunov spectrum of the KZ cocycle is symmetric with respect to the origin, i.e., $\theta_{2g-\alpha+1}=\theta_{\alpha}$ for all $\alpha=1,\dots, g$. Indeed, this is a consequence of the fact that the KZ cocycle is symplectic (the action of $\textrm{Aff}(X)$ on $H_1(M,\mathbb{R})$ preserves the symplectic intersection form). In particular, the Lyapunov spectrum of the KZ cocycle is always determined by its non-negative Lyapunov exponents $\theta_1\geq \dots\geq \theta_g(\geq 0)$.

It is known that the top Lyapunov exponent $\theta_1$ equals $1$, it is simple (i.e., $\theta_1>\theta_2$), and it comes from the action of the Teichm\"uller geodesic flow $g_t$ on $H_1^{st}(M,\mathbb{R})$.

On the other hand, the remaining Lyapunov exponents might exhibit multiplicities depending on the ``symmetries'' of $X=(M,\omega)$. For example, for any $A\in \textrm{Aff}(X)$, the Lyapunov spectrum of the (restriction of the KZ cocycle to the) isotypical components $W_a$ and $W_{A\cdot a}$, $a\in\textrm{Irr}_{\mathbb{R}}(G)$, are the same. Moreover, for each isotypical component $W_a$, the associated Oseledets subspaces $W_a(\theta, x) = W_a\cap E(\theta, x)$ are $G$-modules. In particular, if the representation $a\in\textrm{Irr}_{\mathbb{R}}(G)$ is complex, resp. quaternionic, then each Lyapunov exponent of $W_a$ has multiplicity at least $2$, resp. $4$.

Closing this section, we recall that Eskin-Kontsevich-Zorich \cite{EKZ1} proved the following explicit formula for the sum of the non-negative Lyapunov exponents of the KZ cocycle over the $SL(2,\mathbb{R})$-orbit of an origami. Let $X=(M,\omega)\in\mathcal{H}(\kappa)$, $\kappa=(k_1, \dots, k_{\sigma})$, be an origami of genus $g$. Consider the $SL(2,\mathbb{Z})$-orbit of $X$: this is a finite set\footnote{The cardinality of $SL(2,\mathbb{Z})\cdot X$ is the index of the Veech group $SL(X)$ in $SL(2,\mathbb{Z})$ when the origami $X=(M,\omega)$ is reduced (i.e., the covering $\pi:M\to\mathbb{T}^2$ such that $\pi^*(dz)=\omega$ does not factor through a cover $p:\mathbb{T}^2\to\mathbb{T}^2$ of degree $\textrm{deg}(p)>1$).} of origamis that one can compute by successively applying the generators $T = \left(\begin{array}{cc} 1 & 1 \\ 0 & 1 \end{array}\right)$ and
$S = \left(\begin{array}{cc} 1 & 0 \\ 1 & 1 \end{array}\right)$ of $SL(2,\mathbb{Z})$ to the origamis. In terms of pairs of permutations $h, v\in S_N$ associated to origamis, $T$ and $S$ acts as $T(h,v)=(h,vh^{-1})$ and $S(h,v)=(hv^{-1}, v)$, and this allows\footnote{Here, one has to keep in mind that two pairs of permutations give the same origami when they are simultaneously conjugated, that is, $(h,v)$ and $(\phi h \phi^{-1}, \phi v\phi^{-1})$ correspond to the same origami.} to calculate the $SL(2,\mathbb{Z})$-orbits of origamis. In this context, the sum of the non-negative Lyapunov exponents $1=\theta_1\geq\dots\geq\theta_g$ of the KZ cocycle over $SL(2,\mathbb{R})\cdot X$ is
$$\theta_1+\dots+\theta_g = \frac{1}{12}\sum\limits_{l=1}^{\sigma} \frac{k_l(k_l+2)}{k_l+1} + \frac{1}{\# SL(2,\mathbb{Z})\cdot X} \sum\limits_{\substack{Y\in SL(2,\mathbb{Z})\cdot X,
\\ c \textrm{ is a cycle of } h_Y}} \frac{1}{\textrm{length of } c}$$
where $(h_Y, v_Y)$ is a pair of permutations associated to the origami $Y$.

\section{Multiplicities of representations for regular origamis}
\label{sec:multiplicities}
This section calculates the multiplicities of representations that occur in the cohomology of regular origamis.
For the topological case, these calculations were done by Matheus, Yoccoz, and Zmiaikou \cite{MYZ}.
We also compute the multiplicities for the holomorphic $1$-forms (cf. \cite[Remark 5.13]{MYZ}).

Two tools appear in the computation.
One is the Lefschetz fixed point formula and its holomorphic version.
The second is the Frobenius reciprocity formula - it reduces the calculation to a cyclic subgroup of the origami symmetries.

\paragraph{Setup.} Calculations will take the algebraic point of view on origamis (cf. \autoref{subsec:origamis}).
Let $G$ be a finite group generated by two elements $h, v\in G$ and let their commutator be $c:=[h,v]=hv h^{-1} v^{-1}$.
Let $S$ be the corresponding regular origami.
The squares of $S$ correspond to the elements of $G$, and there are two edges in the glued surface per element of $G$.

On the origami, the ramification points (i.e. vertices of the square) are in natural bijection with the coset space $G/\ip{c}$.
Here $\ip{c}$ denotes the cyclic subgroup of $G$ generated by $c$.
Denoting its cardinality by $N:=|\ip{c}|$ the genus of the surface satisfies
$$
\textrm{genus}(S) = \frac 12 \left(|G|-\frac{|G|} {N}\right) +1
$$

\subsection{Preliminaries from representation theory}
\label{subsec:rep_th_prelims}
We now recall some necessary facts from the representation theory of finite groups.
This material is available in Serre's textbook \cite[Ch. 7]{Serre}.

Consider two finite groups $H\subset G$.
For two complex-valued functions $\phi,\psi$ on $G$, define
$$
\ip{\phi,\psi}_G = \frac 1 {|G|} \sum_{g\in G} \phi(g)\overline{\psi(g)}
$$
A similar definition applies to $\ip{,}_H$.

For $G$-representations $V$ and $\pi$ (with $\pi$ irreducible) denoting their characters by $\chi_V$ and $\chi_\pi$ we have
$$
\ip{\chi_\pi,\chi_V}_G = \dim \Hom_G(\pi, V)
$$
In other words, the product of characters gives the multiplicity of $\pi$ in $V$.

\paragraph{Restriction.}
Given a representation $\pi$ of $G$, let $\Res^G_H(\pi)$ denote the representation of $H$ obtained by restricting $\pi$ to it.
If the character of $\pi$ is $\chi_\pi$, let $\Res^G_H \chi_\pi$ denote the character of the restricted representation.
We will often omit the notation of $G$ and $H$ from $\Res$, since we will work with only one such pair.
Note that $\Res\chi_\pi$ is simply the restriction of the function $\chi_\pi$ on $G$ to the subset (and subgroup) $H$.

\paragraph{Induction.}
Given a representation $\pi$ of $H$, let $\Ind^G_H(\pi)$ denote the representation of $G$ induced from $\pi$.
If the character of $\pi$ is $\chi_\pi$, then the character of the induced representation is given by the formula
\begin{align}
\label{eq:ind_rep}
\Ind^G_H \chi_\pi(g) = \frac 1 {|H|} \sum_{\substack{s\in G\\ sgs^{-1}\in H} } \chi_\pi (sgs^{-1})
\end{align}
The factor of $1/|H|$ accounts for the action of $H$ on the set of $s\in G$ such that $sgs^{-1}\in H$.

\paragraph{Frobenius reciprocity.}
The relation between restriction and induction is given by the Frobenius reciprocity formula:
$$
\ip{\Ind^G_H \chi_{\pi_1}, \chi_{\pi_2} } = \ip{ \chi_{\pi_1}, \Res^G_H \chi_{\pi_2} }_H
$$
Here, $\pi_1$ is a representation of $H$ and $\pi_2$ a representation of $G$.

We shall apply this formula to the group $G$ coming from the regular origami $S$ and to its cyclic subgroup $H=\ip{c}$.
These will be omitted from the notation in $\Res$ and $\Ind$.
Finally, observe that inducing the trivial representation of $H$ to $G$ gives the representation of $G$ on functions on $G/H$.

\paragraph{Real, Complex, Quaternionic representations.}
Let $\pi$ be an irreducible representation of $G$ on a complex vector space.
We have the following possibilities for $\pi$:
\begin{enumerate}
 \item Real representation. The character is real-valued and the representation is induced from the complexification of a real representation.
 \item Complex representation. The character is complex-valued. The representation $\pi\oplus \overline{\pi}$ is induced from a representation on a real vector space.
 \item Quaternionic representation. The character is real-valued, but the representation is \emph{not} induced from the complexification of some real representation.
 The representation $\pi\oplus \pi$, however, is induced from a representation on a real vector space.
\end{enumerate}

Let now $\pi$ be an irreducible representation of $G$ on a real vector space.
Unlike the case of complex vector spaces above, $\pi$ can have non-trivial endomorphisms.
We have the following possibilities:
\begin{enumerate}
 \item Real representation. The endomorphisms equal $\mathbb{R}$ and after complexification, $\pi$ is still irreducible.
 \item Complex representation. The endomorphisms equal $\mathbb{C}$ and after complexification, $\pi$ becomes isomorphic to $\pi_1\oplus \overline{\pi_1}$, where $\pi_1$ is a complex representation in the sense above.
 \item Quaternionic representation. The endomorphisms equal $\mathbb{H}$ and after complexification, $\pi$ becomes isomorphic to $\pi_1\oplus \pi_1$, where $\pi_1$ is a quaternionic representation in the sense above.
\end{enumerate}

\subsection{Isotypical components in the topological cohomology}
In this section, we shall compute the multiplicity of an irreducible representation of $G$ in the first cohomology of the origami $S$.

\paragraph{Lefschetz fixed point formula.}
Recall the following statement, described for instance in the monograph of Griffiths and Harris \cite[p. 421]{GH}.
Let $g$ be a smooth diffeomorphism with isolated fixed points of a compact manifold $S$.
Then we have the following formula for the alternating sum of traces for the action on cohomology:
$$
\sum_{i=0}^{\dim S} (-1)^i \tr (\left. g^* \middle|_{H^i(S)} \right. ) = \sum_{g(p)=p} \textrm{index}(p)
$$

\paragraph{The case of origamis.}
In our case, $S$ is a smooth surface and all fixed points have index $1$ (since the diffeomorphisms respect a complex structure).
We have a full action of the group $G$ and let $\chi_{H^1}$ be the character of the $G$-representation of the first cohomology group $H^1(S)$ (coefficients in $\mathbb{C}$).

Applying the Lefschetz fixed point formula to all $g$ not the identity, we find
$$
\chi_{H^1}(g) = 
\begin{cases}
 2\cdot \textrm{genus}(S) & \textrm{ if } g=id\\
 2-\#\textrm{fixed pts.}(g) & \textrm{ otherwise}
\end{cases}
$$
\paragraph{Counting fixed points.}
Assume $g\in G$ is not the identity.
Then the fixed points of the action of $g$ on $S$ can only be among the vertices of the squares tiling $S$.
Moreover, the action of $g$ on them is the same as the action on the coset $G/\ip{c}$, which is naturally identified with the set of vertices.
Letting $\chi_0$ denote the character of the trivial representation of $\ip{c}$, the number of fixed points of $g$ acting on $S$ is therefore given by $\Ind^{G}_{\ip{c}} \chi_0 (g)$.

Therefore, the character of the $G$-representation on $H^1(S)$ can be written as
\begin{align}
\label{eq:chi_H1}
\chi_{H^1} = 2 - \Ind \chi_0 + \left(\Ind {\chi_0} (id)-2 + 2 \textrm{genus}(S) \right)\delta_{id}
\end{align}
Here $\delta_{id}$ is the delta-function at identity on $G$.
Note also that $\Ind \chi_0 (id) = \dim (\Ind \chi_0) = |G|/N$.

\paragraph{Computing multiplicities.}
The multiplicity of an irreducible representation $\pi$ of $G$ in $\chi_{H^1}$ is given by $\ip{\chi_{H^1},\chi_\pi}_G$.
Using Frobenius reciprocity, this will reduce to a computation on the cyclic group generated by $c$.

We have the following identities for inner products of functions on $G$ (where $const.$ denotes a constant function):
\begin{align*} 
 \ip{\delta_{id},\chi_\pi}_G &= \frac{\dim \pi}{|G|}\\
 \ip{const.,\chi_\pi}_G &=
  \begin{cases}
    const.	& \textrm{ if } \pi \textrm{ is trivial}\\
    0 		& \textrm{ otherwise}
  \end{cases}
\end{align*}
Taking the product of $\chi_\pi$ with $\chi_{H^1}$, using \autoref{eq:chi_H1} and the formulas above, we find
\begin{align*}
 \ip{\chi_{H^1},\chi_\pi} & = 2\delta_{\pi=triv} - \ip{\Ind \chi_0, \chi_\pi}_G + \frac{\dim \pi}{|G|}\left( \frac{|G|}{N} -2 + \left(|G| - \frac{|G|}N + 2\right) \right)\\
 &= 2 \delta_{\pi=triv} + \dim \pi - \ip{\Ind \chi_{0},\chi_\pi}_G
\end{align*}
Above and below, $\delta_{\pi=triv}$ denotes a constant which is $1$ if $\pi$ is the trivial representation, and $0$ otherwise.
Using Frobenius reciprocity from $G$ to the subgroup $\ip{c}$ to evaluate the last term, we find
\begin{align}
\label{eq:top_mult}
\ip{\chi_{H^1},\chi_\pi} = 2 \delta_{\pi=triv} + \dim \pi - \ip{\chi_0,\Res \chi_\pi}_{\ip{c}}
\end{align}
The term $\ip{\chi_0,\Res \chi_\pi}_{\ip{c}}$ counts the multiplicity of the trivial representation of the restriction of $\pi$ to the cyclic group $\ip{c}$.
This is exactly the dimension of the space of vectors fixed by $c\in G$ in the representation $\pi$. 
Note that \autoref{eq:top_mult} recovers a result of Matheus-Yoccoz-Zmiaikou \cite[Cor. 3.5]{MYZ}.

\subsection{Isotypical components in the holomorphic forms}
The pattern for the calculation is similar to the one above.
The Lefschetz fixed point formula has a holomorphic version, which is a bit more involved.
To have a more convenient algebraic framework to analyze it, we introduce an extra parameter and expand our functions in power series.

\paragraph{Holomorphic Lefschetz fixed point formula.}
Recall the following statement, described for instance in the monograph of Griffiths and Harris \cite[p. 426]{GH}.
Let $g$ be a holomorphic self-map of a complex manifold $S$, with isolated fixed points.
Then the alternating sum of traces on the Dolbeault cohomology groups is given by
$$
\sum_{i=0}^{\dim_\mathbb{C} S} (-1)^i \tr \left( g^* \middle|_{H^{0,i}}(S) \right) = 
\sum_{ \substack{g(p)=p\\ (Dg)_p:T_p S\to T_p S } } \frac {1} {\det\left(1-(Dg)_p\right)}
$$
In our situation, $S$ is a compact Riemann surface so the formula becomes
$$
1-\tr \left(g^*\middle|_{H^{0,1}(S)} \right) = \sum_{g\cdot p=p} \frac {1}{1-\mu_p(g)}
$$
where $\mu_p(g)$ is the derivative of $g$ at its fixed point $p$ (thus, a complex scalar).
Note that $\mu_p(g)$ is never $1$ when $g$ is not the identity.

\paragraph{Derivative at fixed points for origamis.}
In our situation, the fixed points are among the elements of the coset $G/\ip{c}$.
Corresponding to one such $p\in S$ we have a coset $h_p\ip{c}$.
This means that $g\cdot h_p \cdot \ip{c} = h_p\cdot \ip{c}$.
Therefore, there exists $k_p\in \mathbb{N}$ such that $gh_p = h_p c^{k_p}$.
So we have that $g=h_p c^{k_p} h_p^{-1}$.

Moreover, the scaling factor $\mu_p$ of the action of $g$ on the tangent space at $p$ is 
\begin{align}
\label{eq:mu_p}
\mu_p (g) = \exp\left( 2\pi\sqrt{-1}\frac{k_p}{N} \right)
\end{align}
As before $N$ is the cardinality of the cyclic group $\ip{c}$.
The holomorphic Lefschetz fixed point formula gives
$$
\chi_{H^{0,1}}(g) = 
\begin{cases}
 \textrm{genus}(S) & \textrm{ if } g=id\\
 1-\sum_{g\cdot p=p} \frac 1 {1-\mu_p(g)}
\end{cases}
$$
For the group $\ip{c}$ we have the characters $\chi_j$ defined on the generator via
$$
\chi_j(c):= \exp\left( 2\pi\sqrt{-1}\frac j N \right)
$$
Then we have for any $j\in \mathbb{N}$ the following relation between local multipliers and characters:
$$
\sum_{g\cdot p=p} \mu_p(g)^j = \Ind^G_{\ip{c}} \chi_j (g)
$$
This follows from the description of $\mu_p$ in \autoref{eq:mu_p} and the formula for the induced character given in \autoref{eq:ind_rep}.

\paragraph{An auxiliary function.}
To compute the multiplicity of $\pi$ in $H^{0,1}$, we introduce an auxiliary function.
Its power series expansion will allow us to use the above formula for the powers of multipliers.
For $r\in \mathbb{C}$, define
$$
\chi_{H^{0,1}}(g,r) =
\begin{cases}
 \textrm{genus}(S) & \textrm{ if } g=id\\
 1-\sum_{g\cdot p=p} \frac 1 {1-r\mu_p(g)} & \textrm{ otherwise}
\end{cases}
$$
For fixed $g\in G$, this is a meromorphic function of $r$.
It has finitely many poles on the unit circle, but there is no pole at $1$.
The value of $\chi_{H^{0,1}}(g,r)$ at $r=1$ is exactly the character we want to understand.

We shall next perform a power series expansion for $|r|<1$ and manipulate the function as a uniformly convergent power series.
The result will have a limit as $r\to 1$ and this will give the desired multiplicity.

We have, for $g\neq id$ and $|r|<1$ that
\begin{align*}
 \chi_{H^{0,1}}(g,r) & = 1 - \sum_{g\cdot p =p} \sum_{i\geq 0} r^i \mu_p(g)^i\\
	& = 1- \sum_{i\geq 0} r^i \Ind^G_{\ip{c}} \chi_i(g)
\end{align*}
This means that in general, for all $g\in G$ and $|r|<1$ we have
\begin{align*}
 \chi_{H^{0,1}}(g,r)& = 1- \Bigg(\sum_{i\geq 0} r^i \Ind^G_{\ip{c}} \chi_i(g) \Bigg)+ 
 \delta_{id}\left(\frac{|G|}{N}\Big(\sum_{i\geq 0}r^i \Big)-1 + \textrm{genus}(S)  \right)\\
 & = 1- \left(\sum_{i\geq 0} r^i \Ind^G_{\ip{c}} \chi_i(g)\right) +\delta_{id} |G| \left( \frac 1 {1-r}\cdot \frac 1 N + \frac{N-1}{2N} \right)
\end{align*}

\paragraph{Multiplicities of representations.}
To compute the multiplicity of a representation $\pi$ in $H^{0,1}$, we take the product of the corresponding characters.
We shall use $\chi_{H^{0,1}}(r)$ with its power series expansion, and will evaluate the result as $r\to 1$.
We have the following expression (using Frobenius reciprocity at the last step):
\begin{multline*}
 \ip{\chi_{H^{0,1}}(r),\chi_\pi  }_G = \\
 = \ip{ 1- \left(\sum_{i\geq 0} r^i \Ind^G_{\ip{c}} \chi_i(g)\right) +\delta_{id} |G| \left( \frac 1 {1-r}\cdot \frac 1 N + \frac{N-1}{2N} \right) , \chi_\pi }_G\\
 = \delta_{\pi=triv} - \sum_{i\geq 0}r^i\ip{\Ind \chi_i,\chi_\pi}_G + \dim \pi \left(\frac 1 {1-r} \cdot \frac 1N + \frac{N-1}{2N} \right)\\
 = \delta_{\pi=triv} +\frac 12 \dim \pi - \sum_{i\geq 0} r^i \ip{\chi_i,\Res \chi_\pi}_{\ip{c}} + \frac {\dim \pi}{N} \left( \frac 1{1-r} - \frac 12 \right)
\end{multline*}
Define now $m_i:=\ip{\chi_i,\Res \chi_\pi}_{\ip{c}}$, which is also the number of eigenvalues of $c$ in the representation $\pi$ that are equal to $\exp(2\pi \sqrt{-1} \frac i N)$.
We then have that
$$
m_0+m_1 + \cdots + m_{N-1} = \dim \pi
$$
We also extend periodically the sequence by $m_{j+N}=m_j$.
The multiplicity of the representation then becomes
\begin{multline*}
 \delta_{\pi=triv} + \frac 12 \dim \pi - \sum_{i\geq 0} r^i m_i + \frac {\dim \pi}{N}\left( \frac 1 {r-1} - \frac 12 \right) =\\
=\delta_{\pi=triv} + \frac 12 \dim \pi - \frac{m_0 + r m_1 + \cdots + r^{N-1}m_{N-1}}{1-r^N} + \frac {\dim \pi}N \left(\frac 1 {1-r} - \frac 12\right)
\end{multline*}
We would like to evaluate the last two terms at $r=1$.
For this, we first rearrange:
\begin{multline*}
 \frac{m_0 + rm_1 + \cdots + r^{N-1}m_{N-1}} {1-r^N} - \frac{\dim \pi}{N}\cdot \frac 1 {1-r}=\\
 =\frac{1}{1-r}\left(\frac{ m_0 + rm_1+\cdots + r^{N-1}m_{N-1} }{1+r+\cdots+r^{N-1}} - \frac{\dim \pi}{N}\right)
\end{multline*}
Recalling that $m_0+\cdots+m_{N-1}=\dim \pi$, we can evaluate the limit as $r\to 1$ via L'Hopital's rule:
\begin{multline*}
 \left.\left( \frac{m_0+\cdots + r^{N-1}m_{N-1}}{1+r+\cdots+r^{N-1}} \right)^{'} \middle|_{r=1}\right. = \\
 = \frac {1}{N^2} \Bigg( \left(m_1+2m_2+\cdots + (N-1)m_{N-1} \right)N- \\
 -\frac{N(N-1)}2 (m_0+m_1+\cdots+m_{N-1})  \Bigg)\\
 =\frac 1N \left( \sum_{i=0}^{N-1} \left(i-\frac {N-1}2\right)m_i \right)
\end{multline*}
Plugging this value into the above calculation of the multiplicity, we find that $\pi$ appears in $H^{0,1}$ with multiplicity
\begin{align}
\label{eq:holom_mult}
 \delta_{\pi=triv} + \frac 12 (\dim \pi - m_0) + \frac 1N \left( \sum_{i=1}^{N-1} \left(i-\frac N2\right)m_i \right)
\end{align}
As usual, $\delta_{\pi=triv}$ is a constant equal to $1$ if $\pi$ is the trivial representation and $0$ otherwise.
The numbers $m_i$ denote the multiplicity of the eigenvalue $\exp \left(2\pi \sqrt{-1}\frac{i}{N}\right)$ for the element $c\in G$ in the $G$-representation $\pi$.

\begin{remark}
 \begin{enumerate}
  \item The term $\delta_{\pi=triv}+1/2(\dim \pi - m_0)$ from \autoref{eq:holom_mult} is exactly one half of what appears in \autoref{eq:top_mult} for the multiplicity of $\pi$ in $H^1$.
  \item Denote $\Delta_\pi:=\sum_{i=0}^{N-1} \left(i - N/2\right)m_i$.
  Then for the complex-conjugate representation $\overline{\pi}$ we have $\Delta_{\overline{\pi}} = - \Delta_\pi$.
  So the multiplicity in $H^{0,1}$ combined with that for $H^{1,0}$ gives the correct multiplicity for $H^1$.
  \item If $\chi_\pi$ is real-valued, then necessarily $\Delta_\pi = 0$.
  Indeed, in this case the representations $\pi$ and $\overline{\pi}$ are isomorphic since they have the same character.
  The character $\chi_\pi$ takes real values if and only if $\pi$ is a real or quarternionic representation.
  Therefore, $\Delta_\pi$ can be non-zero only for purely complex representations of the finite group $G$.
 \end{enumerate}

\end{remark}

\section{General considerations about monodromy}
\label{sec:monodromy}

This section contains a general discussion of two aspects relevant to our constructions in \autoref{s.quaternion-cover}. First, in \autoref{subsec:loc_syst} we discuss the general structure of semisimple local systems with a finite symmetry group.
The main consequences are for the structure of the Lyapunov spectrum and monodromy representation.
Next, in \autoref{subsec:SOstar} we discuss the group $SO^*(2n)$ in more detail.
In particular, we describe it both as a group of quaternionic and complex matrices.

\subsection{Local systems with symmetries}
\label{subsec:loc_syst}

\newcommand{\bV}{\mathbb{V}}
\newcommand{\bR}{\mathbb{R}}
\newcommand{\bC}{\mathbb{C}}
\newcommand{\bH}{\mathbb{H}}
\newcommand{\bW}{\mathbb{W}}

\paragraph{Setup.}
Consider a local system $\bW\to X$ over some base.
Assume that $\bW$ satisfies the usual semisimplicity properties: any sublocal system has a complement.
Consider the situation when a finite group $G$ acts on the fibers of the local system $\bW$.
We would like to understand the decomposition of $\bW$ for these symmetries, as well as the consequences for the Lyapunov spectrum (when the base $X$ carries a flow).

\paragraph{Isotypical components of representations.}
First, we consider the canonical decomposition of any $G$-representation $W$.
To fix notation, for an isomorphism class of irreducible representation $\pi$, let $R_\pi$ be a vector space with $G$-action realizing this isomorphism class.
Thus, we have a chosen map $G\to GL(R_\pi)$.

Further, let $A_\pi$ denote the algebra of endomorphisms of $R_\pi$ viewed as a $G$-representation.
When the field of scalars is $\mathbb{R}$, the possibilities for $A_\pi$ are $\mathbb{R},\mathbb{C}$ or $\bH$ (see \autoref{subsec:rep_th_prelims}).

For a given representation $W$, we can form \emph{the space of isotypical components} corresponding to $\pi$, defined by
$$
V_\pi := \textrm{Hom}_G(R_\pi,W)
$$
In other words, $V_\pi$ is the linear space of maps from $R_\pi$ to $W$ which commute with the $G$-action.
Note that $V_\pi$ \emph{does not} carry an action of $G$, however it does carry an action of $A_\pi$.
Indeed, given $a\in A_\pi, \phi\in V_\pi$ and $r\in R_\pi$, define
$$
a\cdot \phi (r) := \phi(ar)
$$
Note that this makes $V_\pi$ into a \emph{right} $A_\pi$-module, since $a_1 \cdot (a_2 \cdot \phi) = (a_2\cdot a_1)\cdot \phi$.
We shall therefore write the action of $A_\pi$ on $V_\pi$ on the right.

Next, we have a natural evaluation map:
\begin{align*}
 ev:& V_\pi\otimes R_\pi \to W\\
 & \phi\otimes r \mapsto \phi(r)
\end{align*}
Note that this map surjects onto the space of isotypical components of $W$ isomorphic to $R_\pi$.
However, it also has a kernel.
Namely, given $a\in A_\pi$ (recall that is acts on $V_\pi$ on the right), we have that
$$
ev(\phi\cdot a \otimes r) = \phi(ar) = ev(\phi\otimes a\cdot r)
$$
We therefore have a factorization of the evaluation map to
$$
ev: V_\pi \otimes_{A_\pi} R_\pi \to W
$$
where $V_\pi\otimes_{A_\pi} R_\pi := V_\pi\otimes R_\pi / \left\lbrace\phi a \otimes r = \phi\otimes ar\right\rbrace$.
Moreover, this map is a natural isomorphism onto the collection of isotypical components of $W$ isomorphic to $R_\pi$.

To summarize the discussion, recall we started with an arbitrary representation $W$ of $G$.
For each irreducible representation $\pi$ of $G$, we constructed the space $V_\pi$ which carried a right action of $A_\pi$ - the endomorphisms of $R_\pi$.
This gave us a natural isomorphism
$$
W=\bigoplus_{\pi\in\textrm{Irr}_{\mathbb{R}}(G)} V_\pi \otimes_{A_\pi} R_\pi
$$

\paragraph{Isotypical components of local systems.}
We now extend the above discussion to local systems.

Given the local system $\bW\to X$ carrying an action of the finite group $G$, we can form the local systems of isotypical components (for each representation $\pi$ of $G$)
$$
\bV_\pi := \textrm{Hom}_G(R_\pi,\bW)
$$
As before, $\bV_\pi$ carries an action of the endomorphisms $A_\pi$ of $R_\pi$, but no canonical action of the group $G$.
We have the canonical decomposition of $\bW$ as
\begin{align}
\label{eq:loc_syst_dec}
\bW = \bigoplus_{\pi\in\textrm{Irr}_{\mathbb{R}}(G)} \bV_\pi \otimes_{A_\pi} R_\pi
\end{align}
If $\bW$ carried a variation of Hodge structures, then so will $\bV_\pi$ (and the weight will be the same).
Moreover, now the variation on $\bV_\pi$ will carry an action of the endomorphism algebra $A_\pi$, rather than the group $G$ which was on $\bW$.

\paragraph{Lyapunov exponents. Monodromy.}
If the base of the local system $\bW\to X$ carried a flow, we can consider the associated Lyapunov exponents.
To understand them, recall the canonical decomposition provided in \autoref{eq:loc_syst_dec}.
Let $r$ be the rank of $R_\pi$ viewed as a module over $A_\pi$.
This will control the multiplicities of the answers for $\bW$ in terms of those for $\bV_\pi$.

Each local system $\bV_\pi$ will have its own Lyapunov exponents $\{\lambda_{i,\pi}\}$.
Then for $\bW$, the Lyapunov exponent $\lambda_{i,\pi}$ will appear with multiplicity $r$.
Moreover, the Oseledets subspaces for $\bW$ can be also read off from \autoref{eq:loc_syst_dec} and the Oseledets subspaces of $\bV_\pi$.
This is compatible with the fact that the Oseledets subspaces of $\bW$ must themselves carry an action of $G$.

Finally, the monodromy for $\bW$ can be understood in terms of the monodromy for $\bV_\pi$ as follows.
The local system $\bV_\pi$ will have some monodromy group $\mathbb{G}$, which will commute with the action of $A_\pi$.
Then the monodromy of $\bW$ will equal $r$ copies of the same group.

\paragraph{The example from \autoref{t.FFM}.}
In this case, we will have a local system, called $W_{\chi_2}$ in the sequel, but let us call it $\bW$ for this discussion.
It will correspond to a quaternionic representation $\chi_2$ occurring with multiplicity $3$.
This means the space of isotypical components of $\bW$ (denoted $\bV_{\chi_2}$ according to the above discussion) will be of rank $3$ over the endormorphisms of $\chi_2$, which are the quaternions $\bH$.

Since the real dimension of the representation will be $4$, the total real dimension of $\bW$ will be $12$.
We also see that the real dimension of $\bV_{\chi_2}$ will be $12$, but its monodromy can be viewed as quaternionic $3\times 3$ matrices.

\subsection{A description of \texorpdfstring{$SO^*(2n)$}{SOstar} }
\label{subsec:SOstar}
This section describes in more detail the group $SO^*(2n)$.
It is a real Lie group, but can be viewed as a group of either quaternionic or complex matrices, preserving certain (skew-)hermitian forms.

\paragraph{Conventions.}
Quaternions will be denoted
$$
\bH = \{ a + bi + cj + dk | a,b,c,d\in \bR\}
$$
If $x=a+bi+cj+dk$ is a quaternion, its conjugate is $\overline{x}:=a-bi-cj-dk$.
Elements of $\bH^n$ are quaternionic column vectors, and if $x\in \bH^n$ is one such, then $x^t$ denotes the corresponding row vector.
Denote adjoints of vectors by $x^\dag := \overline{x}^t$.

\paragraph{Quaternionic description of $SO^*(2n)$.}
View $\bH^n$ as a \emph{right} $\bH$-module and define the quaternionic-linear group by
$$
GL_n(\bH):= \{ f:\bH^n\to \bH^n \textrm{ invertible} | f(x\lambda) = f(x)\lambda, \forall x\in \bH^n, \lambda\in \bH\}
$$
We can define the quaternionic skew-hermitian form $C(-,-)$ for $x,y\in \bH^n$ via
$$
C(x,y) := x^\dag \cdot j \cdot y
$$
Here $j\in \bH$ is just the element of the quaternions, $x^\dag$ is a row vector and $y$ is a column vector.
Note that we have the following properties of the bilinear form, for $\lambda,\mu \in \bH$:
$$
C(x\cdot \lambda,y\cdot \mu) = -\overline{\lambda}\cdot \overline{C(y,x)} \cdot \mu
$$
The group is now defined by the requirement to preserve this form:
$$
SO^*(2n) := \{ f\in GL_n(\bH) | C(x,y) = C\left(f(x),f(y)\right) \}
$$
Another common name for this group is $U^*_n(\bH)$.

\paragraph{Complex description of $SO^*(2n)$.}
For a complex description, identify $x\in\bH$ with the vector $\left[\begin{smallmatrix} a_x\\ b_x \end{smallmatrix}\right]\in \bC^2$ by the requirement that $x=a_x + jb_x$ in $\bH$.
Further make the identification of $x\in\bH^n$ with $\left[\begin{smallmatrix} a_x\\ b_x \end{smallmatrix}\right]\in \bC^{2n}$ via $x=a_x + jb_x$ (thus $a_x,b_x\in \bC^n$).

Note that $\bC$ acts on both spaces on the right in a natural way, giving both the structure of a $\bC$ vector space.
On $\bC^{2n}$ we have a $\bC$-antilinear operator $R_j$ induced from multiplication by $j$ on the right:
$$
R_j \begin{bmatrix} a_x\\ b_x \end{bmatrix}= 
\begin{bmatrix} -\overline{b_x}\\ \overline{a_x} \end{bmatrix}
$$
For two quaternion vectors $x,y\in \bH^n $ and written as $x=a_x+jb_x$ and $y=a_y+jb_y$, we have
$$
C(x,y) = x^\dag \cdot j \cdot y = \left( b_x^\dag a_y - a_x^\dag b_y \right) + j \left( a_x^t a_y + b_x^t b_y \right)
$$
On $\bC^{2n}$ we can thus define two forms
\begin{align*}
\omega\left( \begin{bmatrix} a_x\\ b_x \end{bmatrix}, \begin{bmatrix} a_y\\ b_y \end{bmatrix}\right) &=
b_x^\dag a_y - a_x^\dag b_y \\
g\left( \begin{bmatrix} a_x\\ b_x \end{bmatrix}, \begin{bmatrix} a_y\\ b_y \end{bmatrix}\right) &=
a_x^t a_y + b_x^t b_y 
\end{align*}
Note that $\omega$ is skew-hermitian, i.e. $\omega(v,w)=-\overline{\omega(w,v)}$, and $g$ is symmetric.
Letting $h(v,w):=i\omega(v,w)$, we see that $h$ is in fact hermitian, i.e. $h(v,w)=\overline{h(w,v)}$ and a linear transformation of $\bC^{2n}$ preserves $h$ if and only if it preserves $\omega$.

It is clear that any quaternionic matrix preserving the quaternionic skew-hermitian form $C$ above, viewed as a matrix in $GL_{2n}(\bC)$, will necessarily preserve the symmetric form $g$ and hermitian form $h$.

However, the converse is also true.
If a matrix $A\in GL_{2n}(\bC)$ preserves both $g$ and $h$, then it must come from a quaternionic matrix preserving $C$.
For this, it suffices to check that such an $A$ will have to commute with the operator $R_j$ defined above.
More generally, we have that any two of the following conditions imply the third
\begin{itemize}
 \item The matrix $A\in GL_{2n}(\bC)$ preserves the symmetric form $g$.
 \item The matrix $A\in GL_{2n}(\bC)$ preserves the hermitian form $h$.
 \item The matrix $A\in GL_{2n}(\bC)$ commutes with the $\bC$-antilinear operator $R_j$.
\end{itemize}
In particular, we see that the group $SO^*(2n)$ can be alternatively described as the intersection of a complex orthogonal and a unitary group.
Writing out the matrices for $g$ and $h$ explicitly, we find that
$$
SO^*(2n) = \left \lbrace A\in GL_{2n}(\bC) \middle| AA^t = 1 \textrm{ and } A^\dag \cdot 
\begin{bmatrix}
 0 & i\\
 -i & 0
\end{bmatrix} A = 
\begin{bmatrix}
 0 & i\\
 -i & 0
\end{bmatrix}
\right \rbrace
$$

\section{An interesting quaternionic cover \texorpdfstring{$\widetilde{L}$}{widetildeL}}
\label{s.quaternion-cover}
In this section, we shall describe the origami $\widetilde{L}$.
Then, using known results about Lyapunov exponents, we shall compute its Lyapunov spectrum.
In this particular case, all exponents can be computed explicitly.
This will constrain the monodromy to some extent, and in \autoref{s.irreducibility-exotic-monodromy} we shall prove that in fact the non-trivial piece has $SO^*(2n)$ as its Zariski closure of the monodromy.

\subsection{Definition of \texorpdfstring{$\widetilde{L}$}{widetildeL} }
Let $L^0\in\mathcal{H}(2)$ be the $L$-shaped genus $2$ origami associated to the pair of permutations $h_0 = (1,2)(3)$ and $v_0 = (1,3)(2)$.

Consider the following covering $\widetilde{L}$ of $L^0$. For each element $g$ of the quaternion group $Q=\{1, -1, i, -i, j, -j, k, -k\}$, let us take a copy $L_g$ of $L^0$. The origami $\widetilde{L}$ is obtained by gluing (by translation) the two topmost horizontal sides of $L_g$ with the corresponding two bottom-most horizontal sides of $L_{gi}$, and the two rightmost vertical sides of $L_g$ with the corresponding two leftmost vertical sides of $L_{gj}$ (for each $g\in Q$). Alternatively, we label the sides of $L_g$ according to \autoref{f.1}, and we glue by translation the sides with the same labels.

\begin{figure}[htb!]
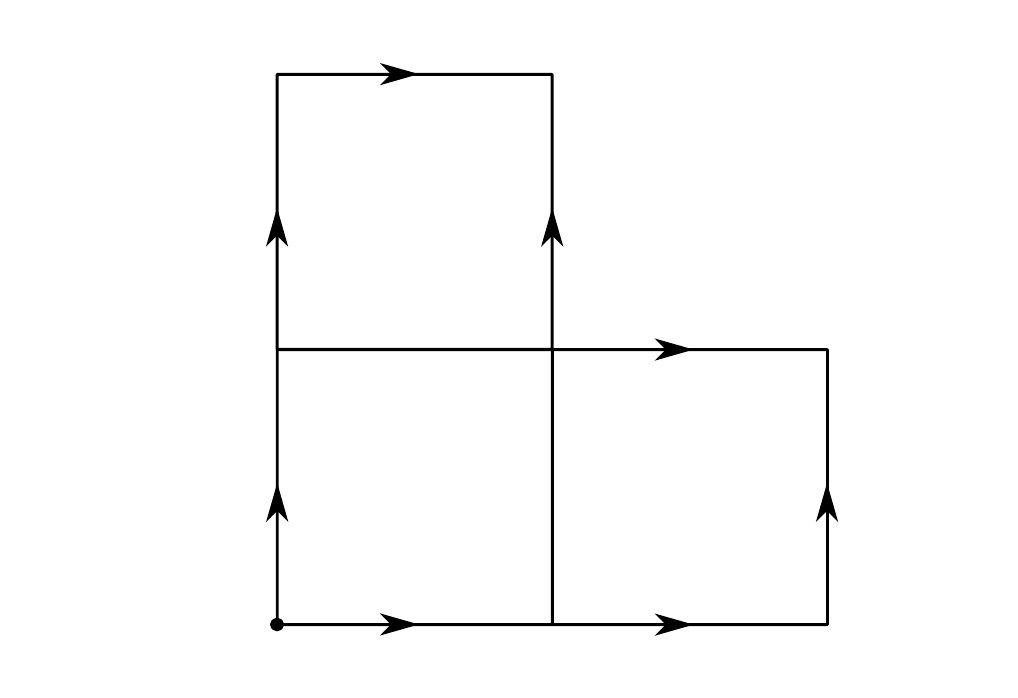
\caption{Labels of the sides of the $L_g$, $g\in Q$.}\label{f.1}
\end{figure}

Denoting by $\underline{g}\in \widetilde{L}$ the bottommost and leftmost corner of $L_g$ (as indicated in \autoref{f.1}), we have that $\underline{g}=\underline{-g}$ for each $g\in Q$, and, moreover, the set $\Sigma=\{\underline{1}, \underline{i}, \underline{j}, \underline{k}\}$ of four distinct points consists of all conical singularities of $\widetilde{L}$. Furthermore, it is not hard to check that the conical angle around each of these singularities is $12\pi$. Thus, $\widetilde{L}\in\mathcal{H}(5,5,5,5)$, so that
$\widetilde{L}$ has genus $11$.

\subsection{The group of automorphisms of \texorpdfstring{$\widetilde{L}$}{widetildeL}} Given $h\in Q$, we obtain an automorphism of $\widetilde{L}$ by sending (by translation) $L_g$ to $L_{hg}$. As it turns out, this accounts for all automorphisms of
$\widetilde{L}$, i.e., $\textrm{Aut}(\widetilde{L})\simeq Q$.

Recall that the non-trivial subgroups of $\textrm{Aut}(\widetilde{L})\simeq Q$ are its center $Z=\{1,-1\}$, and $\langle i\rangle = \{1,-1,i,-i\}$, $\langle j\rangle = \{1,-1,j,-j\}$ and
$\langle k\rangle = \{1,-1,k,-k\}$.

The quotient $L_{\pm}$ of $\widetilde{L}$ by $Z$ is an origami (of genus $5$) in $\mathcal{H}(2,2,2,2)$ with $\textrm{Aut}(L_{\pm})$ is isomorphic to Klein's group $\mathbb{Z}/2\mathbb{Z} \times \mathbb{Z}/2\mathbb{Z}\simeq Q/Z=\{1_{\pm}, i_{\pm},  j_{\pm}, k_{\pm}\}$.
Moreover, the quotients $L_{\langle i_{\pm}\rangle}$, $L_{\langle j_{\pm}\rangle}$ and $L_{\langle k_{\pm}\rangle}$ (resp.) of $L_{\pm}$ by the subgroups $\langle i_{\pm}\rangle=\{1_{\pm}, i_{\pm}\}$, $\langle j_{\pm}\rangle=\{1_{\pm}, j_{\pm}\}$ and $\langle k_{\pm}\rangle=\{1_{\pm}, k_{\pm}\}$ (resp.) of $\textrm{Aut}(L_{\pm})\simeq Q/Z$ give rise to three origamis (of genus $3$) in $\mathcal{H}(2,2)$ that are precisely the quotients of $\widetilde{L}$ by the subgroups $\langle i\rangle$, $\langle j\rangle$ and $\langle k\rangle$ of $\textrm{Aut}(\widetilde{L})\simeq Q$.
Furthermore, the origamis $L_{\langle i_{\pm}\rangle}$, $L_{\langle j_{\pm}\rangle}$ and $L_{\langle k_{\pm}\rangle}$ are unramified double covers of the origami $L^0\in\mathcal{H}(2)$.
In some sense, these characteristics of the origami $L_{\pm}$ are similar to the so-called \emph{wind-tree model} (cf. \cite{DHL}) except that \emph{all} origamis $L_{\langle i_{\pm}\rangle}$, $L_{\langle j_{\pm}\rangle}$ and $L_{\langle k_{\pm}\rangle}$ belong to the odd connected component $\mathcal{H}(2,2)^{odd}$ of the stratum $\mathcal{H}(2,2)$.

\subsection{The first absolute homology group of \texorpdfstring{$\widetilde{L}$}{widetildeL}}
The intermediate cover $\widetilde{L}\to L_{\pm} = \widetilde{L}/Z$ of $\widetilde{L}\to\mathbb{T}^2$ induces a decomposition $$H_1(\widetilde{L}, \mathbb{R}) = H_1^+(\widetilde{L}, \mathbb{R}) \oplus H_1^-(\widetilde{L}, \mathbb{R})$$ where $-1\in Z\subset Q\simeq \textrm{Aut}(\widetilde{L})$ acts by $\textrm{id}$, resp. $-\textrm{id}$, on $H_1^+(\widetilde{L}, \mathbb{R})$, resp. $H_1^-(\widetilde{L}, \mathbb{R})$, and $H_1^+(\widetilde{L})$ is naturally isomorphic to $H_1(L_{\pm},\mathbb{R})$. Note that $H_1^+(\widetilde{L},\mathbb{R})$ is a $10$-dimensional subspace of the $22$-dimensional space $H_1(\widetilde{L}, \mathbb{R})$ (since $L_{\pm}$ has genus $5$ and $\widetilde{L}$ has genus $11$).

Similarly, the intermediate covers $L_{\pm}\to L_{\langle i_{\pm} \rangle}\to L^0$, $L_{\pm}\to L_{\langle j_{\pm}\rangle}\to L^0$ and $L_{\pm}\to L_{\langle k_{\pm}\rangle}\to L^0$ induce a decomposition
$$H_1(L_{\pm},\mathbb{R}) = H_1^{+,+}(L_{\pm}, \mathbb{R}) \oplus H_1^{+,-}(L_{\pm}, \mathbb{R}) \oplus H_1^{-,+}(L_{\pm}, \mathbb{R}) \oplus H_1^{-,-}(L_{\pm}, \mathbb{R})$$
where $i_{\pm}\in\textrm{Aut}(L_{\pm})$, resp. $j_{\pm}\in \textrm{Aut}(L_{\pm})$, acts on $H_1^{\varepsilon_{i_{\pm}}, \varepsilon_{j_{\pm}}}(L_{\pm},\mathbb{R})$, $\varepsilon_{i_{\pm}}, \varepsilon_{j_{\pm}}\in \{+,-\}$ by $(\varepsilon_{i_{\pm}})\textrm{id}$, resp. $(\varepsilon_{j_{\pm}})\textrm{id}$, and
\begin{itemize}
\item $H_1^{+,+}(L_{\pm}, \mathbb{R}) \simeq H_1(L^0,\mathbb{R}) = H_1^{st}(L^0, \mathbb{R})\oplus H_1^{(0)}(L^0, \mathbb{R})$,
\item $H_1^{+,+}(L_{\pm}, \mathbb{R})\oplus H_1^{+,-}(L_{\pm}, \mathbb{R})\simeq H_1(L_{\langle i_{\pm} \rangle}, \mathbb{R})$,
\item $H_1^{+,+}(L_{\pm}, \mathbb{R})\oplus H_1^{-,+}(L_{\pm}, \mathbb{R})\simeq H_1(L_{\langle j_{\pm} \rangle}, \mathbb{R})$,
\item $H_1^{+,+}(L_{\pm}, \mathbb{R})\oplus H_1^{-,-}(L_{\pm}, \mathbb{R})\simeq H_1(L_{\langle k_{\pm} \rangle}, \mathbb{R})$.
\end{itemize}

By means of the isomorphism $H_1(L_{\pm}, \mathbb{R})\simeq H_1^+(\widetilde{L}, \mathbb{R})$, we obtain a decomposition
$$H_1(\widetilde{L}, \mathbb{R})\simeq H_1^{st}(L^0,\mathbb{R})\oplus H_1^{(0)}(L^0,\mathbb{R}) \oplus\bigoplus\limits_{\substack{\alpha,\beta\in\{+,-\}, \\ (\alpha,\beta)\neq (+,+)}} H_1^{\alpha,\beta}(L_{\pm}, \mathbb{R})\oplus H_1^-(\widetilde{L}, \mathbb{R})$$
where all summands are symplectic subspaces that are mutually symplectically orthogonal, and all summands are $2$-dimensional subspaces of $H_1(\widetilde{L}, \mathbb{R})$ except for the $12$-dimensional subspace $H_1^-(\widetilde{L}, \mathbb{R})$.

Observe that the action of $\textrm{Aff}(\widetilde{L})$ on $H_1(\widetilde{L}, \mathbb{R})$ respects each summand of the decomposition above: these summands were defined in terms of deck transformations of certain intermediate coverings of $\widetilde{L}\to\mathbb{T}^2$ which act by pre-composition with translation charts of $\widetilde{L}$, and $\textrm{Aff}(\widetilde{L})$ acts by post-composition with translation charts of $\widetilde{L}$.
In particular, this is a decomposition of the $\textrm{Aut}(\widetilde{L})$-module $H_1(\widetilde{L},\mathbb{R})$ into $\textrm{Aut}(\widetilde{L})$-submodules.

Note that this decomposition of $H_1(\widetilde{L}, \mathbb{R})$ refines $H_1(\widetilde{L}, \mathbb{R}) = H_1^{st}(\widetilde{L}, \mathbb{R}) \oplus H_1^{(0)}(\widetilde{L}, \mathbb{R})$ in the sense that
\begin{equation}\label{e.homology-decomposition}H_1^{(0)}(\widetilde{L},\mathbb{R}) \simeq H_1^{(0)}(L^0,\mathbb{R})\oplus\bigoplus\limits_{\substack{\alpha,\beta\in\{+,-\}, \\ (\alpha,\beta)\neq (+,+)}} H_1^{\alpha,\beta}(L_{\pm}, \mathbb{R})\oplus H_1^-(\widetilde{L}, \mathbb{R})
\end{equation}

As it turns out, this is precisely the decomposition of $H_1^{(0)}(\widetilde{L},\mathbb{R})$ into isotypical components. More precisely, the quaternion group $\textrm{Aut}(\widetilde{L})\simeq Q$ has five irreducible representations $\chi_1, \chi_i, \chi_j, \chi_k, \chi_2'$ (over $\mathbb{C}$) whose characters are given by the following table:
\begin{center}
{\renewcommand{\arraystretch}{1.5}
\renewcommand{\tabcolsep}{0.2cm}
\begin{tabular}{|c|c|c|c|c|c|}
\hline
 & $1$ & $-1$ & $\pm i$ & $\pm j$ & $\pm k$ \\
\hline
$\chi_1$ & $1$ & $1$ & $1$ & $1$ & $1$ \\
\hline
 $\chi_i$ & $1$ & $1$ & $1$ & $-1$ & $-1$\\
\hline
$\chi_j$ & $1$ & $1$ & $-1$ & $1$ & $-1$\\
\hline
$\chi_k$ & $1$ & $1$ & $-1$ & $-1$ & $1$\\
\hline
tr $\chi_2'$ & $2$ & $-2$ & $0$ & $0$ & $0$\\
\hline
\end{tabular}}
\end{center}
Furthermore, the representation $\chi_2'$ is quaternionic while the representations $\chi_1$, $\chi_i$, $\chi_j$, $\chi_k$ are real.
A quick comparison between this table and the actions of the elements of $Q$ on the summands of this decomposition of $H_1^{(0)}(\widetilde{L}, \mathbb{R})$ reveals that
$$H_1^{(0)}(L^0,\mathbb{R}) = W_{\chi_1}\simeq 2\chi_1, \quad H_1^{+,-}(L_{\pm}, \mathbb{R}) = W_{\chi_i} \simeq 2\chi_i,$$
$$H_1^{-,+}(L_{\pm}, \mathbb{R}) = W_{\chi_j} \simeq 2\chi_j, \quad H_1^{-,-}(L_{\pm}, \mathbb{R}) = W_{\chi_k} \simeq 2\chi_k,$$
$$H_1^-(\widetilde{L}, \mathbb{R}) = W_{\chi_2}\simeq 3\chi_2,$$
where $W_a$ stands for the isotypical component of $a$ in $H_1^{(0)}(\widetilde{L},\mathbb{R})$,  $W_a\simeq \ell_a a$ with $\ell_a\in\mathbb{N}$ means that $a$ appears with multiplicity $\ell_a$ in $H_1^{(0)}(\widetilde{L}, \mathbb{R})$, and $\chi_2:=2\chi_2'$.

\subsection{Lyapunov spectrum of the KZ cocycle over \texorpdfstring{$SL(2,\mathbb{R})\cdot\widetilde{L}$}{SL2cdot}}
The Lyapunov spectrum of the restriction to $H_1^+(\widetilde{L},\mathbb{R})$ of the KZ cocycle over $SL(2,\mathbb{R})\cdot\widetilde{L}$ is not difficult to understand thanks to the intermediate covers $L_{\pm}\to L_{\langle \ast\rangle}\to L^0$, $\ast\in\{i_{\pm}, j_{\pm}, k_{\pm}\}$, and some results of Bainbridge and Chen-M\"oller.

More concretely, since $L^0\in\mathcal{H}(2)$, the work of Bainbridge \cite{B} ensures that the subbundles $H_1^{st}(\widetilde{L},\mathbb{R})$, resp. $W_{\chi_1}$ of 
$$H_1^{st}(\widetilde{L},\mathbb{R})\oplus W_{\chi_1} \simeq H_1^{+,+}(L_{\pm},\mathbb{R})\simeq H_1(L^0,\mathbb{R}) = H_1^{st}(L^0,\mathbb{R})\oplus H_1^{(0)}(L^0,\mathbb{R})$$ contribute with the Lyapunov exponents $1$ and $-1$, resp. $1/3$ and $-1/3$.

Next, since $L_{\langle\ast\rangle}\in \mathcal{H}(2,2)^{odd}$, $\ast\in\{i_{\pm}, j_{\pm}, k_{\pm}\}$, the work of Chen-M\"oller \cite{CM} guarantees that the sum of the non-negative Lyapunov exponents associated to each of the subbundles 
$$H_1(L_{\langle\ast\rangle},\mathbb{R})\simeq H_1^{+,+}(L_{\pm}, \mathbb{R}) \oplus H_1^{\alpha,\beta}(L_{\pm},\mathbb{R}) \simeq (H_1^{st}(\widetilde{L}, \mathbb{R}) \oplus W_{\chi_1})\oplus W_{\rho}$$
for the appropriate choices of $\alpha,\beta\in\{+,-\}$, $(\alpha,\beta)\neq (+,+)$, and $\rho\in\{\chi_i, \chi_j,\allowbreak \chi_k\}$ depending on $\ast\in\{i_{\pm}, j_{\pm}, k_{\pm}\}$) is $5/3$.
By combining this information with our knowledge of the Lyapunov exponents associated to $H_1^{+,+}(L_{\pm},\mathbb{R})$,
we deduce that each of the subbundles $W_{\rho}$, $\rho\in\{\chi_i, \chi_j, \chi_k\}$, contribute with Lyapunov exponents $1/3$ and $-1/3$. 

In summary, the Lyapunov spectrum of the restriction of the KZ cocycle over $SL(2,\mathbb{R})\cdot \widetilde{L}$ to the $10$-dimensional symplectic subspace $H_1^+(\widetilde{L}, \mathbb{R})$ is 
$$1 > \frac{1}{3} = \frac{1}{3} = \frac{1}{3} = \frac{1}{3} > -\frac{1}{3} = -\frac{1}{3} = -\frac{1}{3} = -\frac{1}{3} > -1$$
\begin{remark}
It is possible to show that $\textrm{Aff}(\widetilde{L})$ acts on each $W_{\rho}$, $\rho\in\{\chi_1, \chi_i,\allowbreak \chi_j, \chi_k\}$, through a Zariski dense subgroup of $SL(2,\mathbb{R})$.
In particular, the monodromy of the restriction of the KZ cocycle over $SL(2,\mathbb{R})\cdot \widetilde{L}$ to each $W_{\rho}$, $\rho\in\{\chi_1, \chi_i, \chi_j, \chi_k\}$, is given by item (i) (with $d=1$) in the first author's classification of KZ cocycle monodromies mentioned in \autoref{s.introduction}. 
\end{remark}

The Lyapunov spectrum of the restriction of the KZ cocycle over $SL(2,\mathbb{R})\cdot \widetilde{L}$ to $H_1^-(\widetilde{L},\mathbb{R}) = W_{\chi_2}\simeq 3\chi_2$ is determined by the quaternionic nature of the symplectic $\textrm{Aut}(\widetilde{L})$-module $W_{\chi_2}\simeq 3\chi_2$ and the sum of the non-negative Lyapunov exponents of KZ cocycle over $SL(2,\mathbb{R})\cdot\widetilde{L}$. 

More concretely, since $\chi_2$ is a quaternionic representation, each Lyapunov exponent associated to $W_{\chi_2}$ has multiplicity four (at least).
Because $\textrm{Aff}(\widetilde{L})$ respects the symplectic intersection form on the symplectic module $W_{\chi_2}\simeq 3\chi_2$, it follows that the Lyapunov spectrum of the restriction of the KZ cocycle over $SL(2,\mathbb{R})\cdot \widetilde{L}$ to $W_{\chi_2}$ has the form 
$$\lambda = \lambda = \lambda = \lambda \geq 0 = 0 = 0 = 0 \geq -\lambda = -\lambda = -\lambda = -\lambda$$

Hence, the sum $\theta_1+\dots+\theta_{11}$ of the $11$ non-negative Lyapunov exponents of the KZ cocycle acting on $H_1(\widetilde{L}, \mathbb{R}) = H_1^+(\widetilde{L}, \mathbb{R}) \oplus H_1^-(\widetilde{L}, \mathbb{R})$ is 
$$\theta_1+\dots+\theta_{11} = 1 + 4\times\frac{1}{3} + 4\times\lambda + 2\times 0 = \frac{7}{3} + 4\lambda$$

In the sequel, we will use the Eskin-Kontsevich-Zorich formula \cite{EKZ1} in order to determine the numerical value of $\lambda$: 

\begin{theorem}\label{t.EKZ-L-tilde} The numerical value of the sum of the non-negative Lyapunov exponents of the KZ cocycle over $SL(2,\mathbb{R})\cdot\widetilde{L}$ is $3$. In particular, $\lambda=1/6$.
\end{theorem}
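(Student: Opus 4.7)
The plan is to apply the Eskin--Kontsevich--Zorich formula stated at the end of \autoref{s.preliminaries} to compute $\theta_1+\cdots+\theta_{11}$ directly, and then use the expression $\theta_1+\cdots+\theta_{11} = \tfrac{7}{3}+4\lambda$ derived above to solve for $\lambda$. The formula splits into two contributions: the ``stratum term'' depending only on $\kappa$, and the ``orbit term'' obtained from an average over the $SL(2,\mathbb{Z})$-orbit of $\widetilde{L}$.

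First I would evaluate the stratum contribution. Since $\widetilde{L}\in\mathcal{H}(5,5,5,5)$, the zero list is $\kappa=(5,5,5,5)$ and the corresponding term is
\[
\frac{1}{12}\sum_{l=1}^{4}\frac{k_l(k_l+2)}{k_l+1} \;=\; \frac{1}{12}\cdot 4 \cdot \frac{5\cdot 7}{6} \;=\; \frac{35}{18}.
\]
Given the target value of $3$, the orbit contribution must equal $3-\tfrac{35}{18}=\tfrac{19}{18}$, so the remainder of the proof is the explicit verification of this value.

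Next I would carry out the orbit computation. I would start by writing down the pair of permutations $(H,V)$ defining $\widetilde{L}$ on the $24$ squares labeled by pairs $(n,g)$ with $n\in\{1,2,3\}$ and $g\in Q$. The internal gluings of $L^0$ (given by $h_0=(1,2)(3)$ and $v_0=(1,3)(2)$) translate into $H(1,g)=(2,g)$ and $V(1,g)=(3,g)$, while the external gluings prescribed in the definition of $\widetilde{L}$ give $H(2,g)=(1,gj)$, $H(3,g)=(3,gj)$, $V(2,g)=(2,gi)$, $V(3,g)=(1,gi)$. Using the actions $T(h,v)=(h,vh^{-1})$ and $S(h,v)=(hv^{-1},v)$ on pairs of permutations, modulo simultaneous conjugation in $S_{24}$, I would iteratively enumerate the $SL(2,\mathbb{Z})$-orbit of $\widetilde{L}$. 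For each representative $Y=(h_Y,v_Y)$ in this orbit I would compute $\sum_{c\text{ cycle of }h_Y}1/\mathrm{length}(c)$, and then average over the orbit.

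Finally, once the EKZ formula confirms $\theta_1+\cdots+\theta_{11}=3$, I would combine this with the equation $\theta_1+\cdots+\theta_{11}=\tfrac{7}{3}+4\lambda$ obtained from the analysis of $H_1^{+}(\widetilde{L},\mathbb{R})$ and the quaternionic structure of $W_{\chi_2}\simeq 3\chi_2$, yielding $4\lambda=\tfrac{2}{3}$ and hence $\lambda=\tfrac{1}{6}$.

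The main obstacle is the orbit enumeration: with $24$ squares the naive search space is enormous, and this step is realistically handled by computer. To make the computation tractable by hand one can exploit the $Q$-symmetry of $\widetilde{L}$ (which commutes with the $SL(2,\mathbb{Z})$-action on the orbit) to group the squares into $Q$-orbits and keep track only of the permutation data modulo this symmetry, substantially reducing the number of distinct origamis one has to list and thereby the cycle-counting step.
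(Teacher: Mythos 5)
Your proposal follows the paper's proof essentially line by line: apply the Eskin--Kontsevich--Zorich formula, compute the stratum term $\tfrac{1}{12}\cdot 4\cdot\tfrac{5\cdot 7}{6}=\tfrac{35}{18}$, enumerate the $SL(2,\mathbb{Z})$-orbit of $\widetilde{L}$ to get the cylinder term, and then combine $\theta_1+\cdots+\theta_{11}=\tfrac{7}{3}+4\lambda$ with the total of $3$ to extract $\lambda=1/6$. Your permutation setup $H(1,g)=(2,g)$, $H(2,g)=(1,gj)$, $H(3,g)=(3,gj)$, $V(1,g)=(3,g)$, $V(2,g)=(2,gi)$, $V(3,g)=(1,gi)$ is consistent with the text of \autoref{s.quaternion-cover} (and differs from the labelling in \autoref{f.1} only by the outer automorphism of $Q$ swapping $i\leftrightarrow j$, which yields the same origami).

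The one real shortfall is that the proposal stops where the actual proof begins. You write that the orbit term ``must equal $3-\tfrac{35}{18}=\tfrac{19}{18}$, so the remainder of the proof is the explicit verification of this value'' --- but that verification \emph{is} the content of \autoref{t.EKZ-L-tilde}. The paper carries it out in full: it lists the $12$ origamis $\{\widetilde{L},M_1,\dots,M_{11}\}$ in the $SL(2,\mathbb{Z})$-orbit (\autoref{f.2}), records the horizontal permutation $h_Y$ for one representative of each $T$-orbit, and sums $1/\mathrm{length}(c)$ over all cycles to land exactly on $\tfrac{19}{18}$. Deferring this to ``realistically handled by computer'' leaves the central numerical claim unproved. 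Your suggestion to exploit the $Q$-action is reasonable as a speed-up heuristic (for a fixed $Y$ the $Q$-action permutes cycles of $h_Y$ of equal length, so you need only one cycle per $Q$-orbit, and the quotient origami always lies in the $3$-element $SL(2,\mathbb{Z})$-orbit of $L^0$), but as stated it is a sketch rather than a replacement for the enumeration; one still has to track the $Q$-covering data, and the cycle lengths on $\widetilde{L}$ cannot be read off from the quotient alone. In short: right method, correct surrounding arithmetic, but the decisive finite computation is declared rather than done.
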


\begin{proof} Recall that Eskin-Kontsevich-Zorich \cite{EKZ1} showed that the sum of the non-negative Lyapunov exponents of KZ cocycle over the $SL(2,\mathbb{R})$-orbit of an origami $X\in\mathcal{H}(k_1,\dots, k_{\sigma})$ is given by 
$$\frac{1}{12}\sum\limits_{l=1}^{\sigma} \frac{k_l(k_l+2)}{k_l+1} + \frac{1}{\# SL(2,\mathbb{Z})\cdot X} \sum\limits_{\substack{Y\in SL(2,\mathbb{Z})\cdot X,
\\ c \textrm{ is a cycle of } h_Y}} \frac{1}{\textrm{length of } c}$$
where $(h_Y, v_Y)$ is a pair of permutations associated to the origami $Y$.

Since $\widetilde{L}\in \mathcal{H}(5,5,5,5)$, we obtain that $\theta_1+\dots+\theta_{11}$ is equal to 
\begin{eqnarray}\label{e.EKZ-L-tilde}
& & \frac{1}{12}\left(4\times \frac{5\times 7}{6}\right) + \frac{1}{\# SL(2,\mathbb{Z})\cdot \widetilde{L}} \sum\limits_{\substack{Y\in SL(2,\mathbb{Z})\cdot \widetilde{L},
\\ c \textrm{ is a cycle of } h_Y}} \frac{1}{\textrm{length of } c} \\ 
&=& \frac{35}{18} + \frac{1}{\# SL(2,\mathbb{Z})\cdot \widetilde{L}} \sum\limits_{\substack{Y\in SL(2,\mathbb{Z})\cdot \widetilde{L},
\\ c \textrm{ is a cycle of } h_Y}} \frac{1}{\textrm{length of } c} \nonumber
\end{eqnarray}
Therefore, our task is reduced to the computation of the $SL(2,\mathbb{Z})$-orbit of $\widetilde{L}$. 

For this sake, we note that the squares of $\widetilde{L}$ can be labeled in such a way that the square of $L_1$ in \autoref{f.1} with sides $\mu_1$ and $\nu_1$ has number $1$ and the remaining squares of $\widetilde{L}$ are numbered in a compatible manner with the pair of permutations 
\begin{align*}
\begin{split}
h_{\widetilde{L}}=(1,2,13,14,7,8,19,20)(3,15,9,21)(4,5,22,23,10,11,16,17)\\
(6,24,12,18)
\end{split}\\
\intertext{and}
\begin{split}
 v_{\widetilde{L}} = (1,3,4,6,7,9,10,12)(2,5,8,11)(13,15,16,18,19,21,22,24)\\
(14,17,20,23)
\end{split}
\end{align*}

In this setting, recall that the $SL(2,\mathbb{Z})$-orbit of $\widetilde{L}$ is obtained by successively applying the generators $T = \left(\begin{array}{cc} 1 & 1 \\ 0 & 1 \end{array}\right)$ and $S = \left(\begin{array}{cc} 1 & 0 \\ 1 & 1 \end{array}\right)$ of $SL(2,\mathbb{Z})$ while keeping in mind that $T$ and $S$ act on pair of permutations as $T(r,u)=(r,ur^{-1})$ and $S(r,u)=(ru^{-1}, u)$ and the pairs of permutations $(r,u)$ and $(\phi r\phi^{-1}, \phi u\phi^{-1})$ define the same origami. 

By performing this straightforward computation, one gets that $SL(2,\mathbb{Z})\cdot\widetilde{L}$ has the structure described by \autoref{f.2}, namely:
\begin{itemize}
\item $SL(2,\mathbb{Z})\cdot\widetilde{L} = \{\widetilde{L}, M_1, \dots, M_{11}\}$ has cardinality $12$;
\item $M_1=T(\widetilde{L})$, $M_2=T^2(\widetilde{L})$, $M_3=T^3(\widetilde{L})$, $M_4=S(\widetilde{L})$, $M_5=T(M_4)$, $M_6=S(M_4)$, $M_7=T(M_6)$, $M_8=T^2(M_6)$, $M_9=T^3(M_6)$, $M_{10}=S(M_2)$ and $M_{11}=S(M_6)$; 
\item $T$-orbits are $\{\widetilde{L}, M_1, M_2, M_3\}$, $\{M_4, M_5\}$, $\{M_6, M_7, M_8, M_9\}$, $\{M_{10}\}$ and $\{M_{11}\}$; 
\item $S$-orbits are $\{\widetilde{L}, M_4, M_6, M_{11}\}$, $\{M_1, M_9\}$, $\{M_2, M_{10}, M_8, M_5\}$, $\{M_3\}$ and $\{M_7\}$; 
\item the origamis $M_l$, $l\in\{4, 6, 10, 11\}$ are associated to pairs of permutations $(h_{M_l}, v_{M_l})$, $l\in\{4, 6, 10, 11\}$ with 
\begin{align*}
\begin{split}
 h_{M_4} = (1, 18, 17, 7, 24, 23) (2, 16, 9, 8, 22, 3) (4, 15, 14, 10, 21, 20) \\
	 (5, 13, 12, 11, 19, 6)\\
\end{split}\\
\begin{split}
h_{M_6} = 	(1, 11, 22, 20, 7, 5, 16, 14) (2, 19, 17, 10, 8, 13, 23, 4) (3, 18, 9, 24)\\
	  (6, 15, 12, 21)\\
\end{split}\\
\begin{split}
 h_{M_{10}}=	(1, 24, 11) (2, 4, 21) (3, 14, 16) (5, 7, 18) (6, 23, 13) (8, 10, 15) \\
	    	(12, 17, 19) (20, 22, 9)
\end{split}\\
\intertext{and}
\begin{split}
 h_{M_{11}} = (1, 21, 17) (2, 22, 6) (3, 11, 13) (4, 18, 14) (5, 19, 9) (7, 15, 23) \\
 (8, 16, 12) (10, 24, 20)
\end{split}
\end{align*}
\end{itemize}

\begin{figure}[htb!]
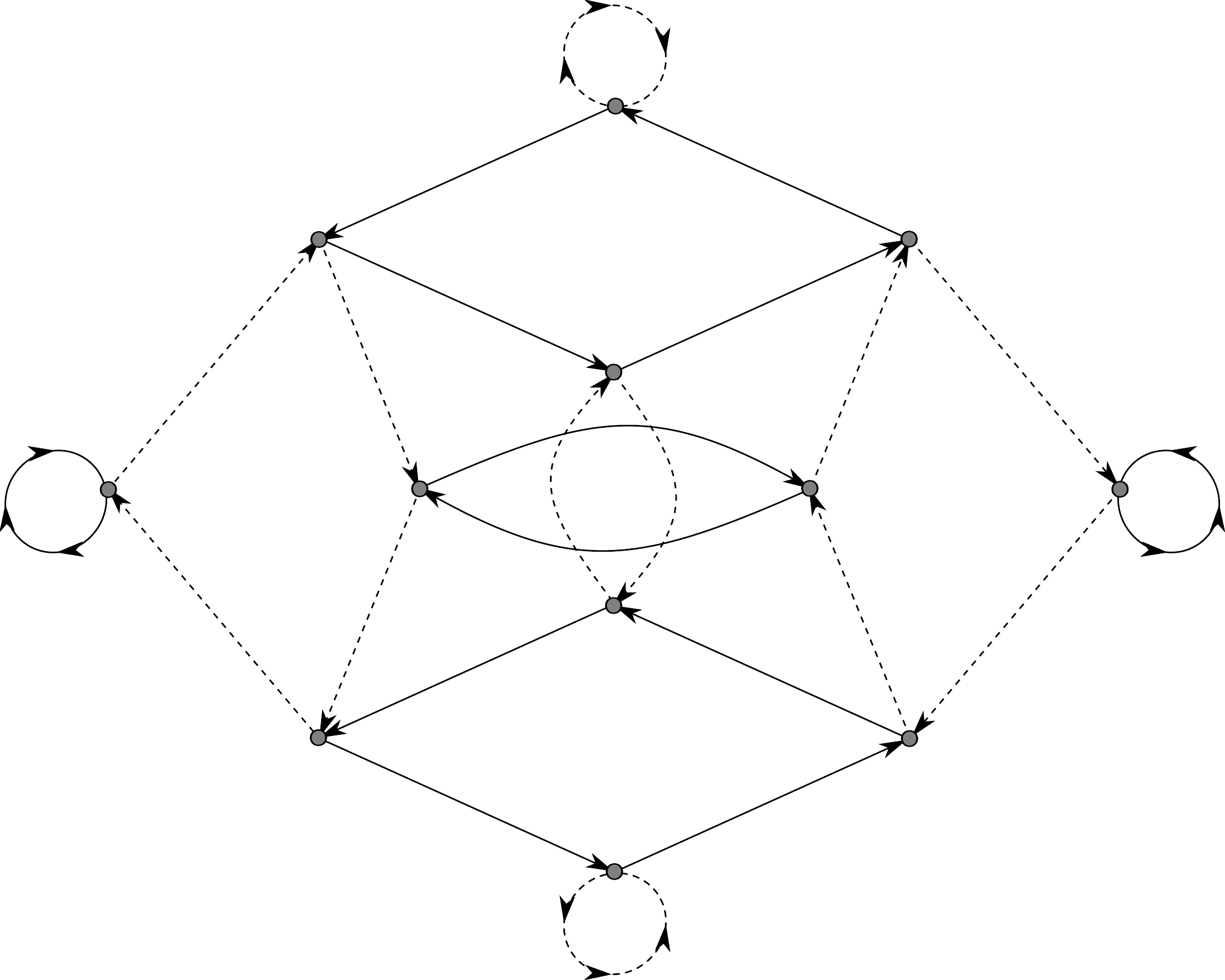
\caption{$SL(2,\mathbb{Z})$-orbit of $\widetilde{L}$. The full arrows indicate the action of $T$ and the dashed arrows indicate the action of $S$.}\label{f.2}
\end{figure}
Observe that we have that the first permutation $h_Y$ in a pair $(h_Y, v_Y)$ associated to an origami $Y$ is constant along the $T$-orbit of $Y$, i.e., $h_Z = h_Y$ whenever the origamis $Z$ and $Y$ belong to the same $T$-orbit (since $T(r,u) = (r, u r^{-1})$). It follows that 
\begin{eqnarray}\label{e.SL2Z-L-tilde}
& &\frac{1}{\# SL(2,\mathbb{Z})\cdot \widetilde{L}} \sum\limits_{\substack{Y\in SL(2,\mathbb{Z})\cdot \widetilde{L},
\\ c \textrm{ is a cycle of } h_Y}} \frac{1}{\textrm{length of } c}  = \\ 
& &\frac{1}{12}\left\{4\times\left(\frac{1}{8} + \frac{1}{4} + \frac{1}{8} + \frac{1}{4} \right) + 2\times\left(\frac{1}{6} + \frac{1}{6} + \frac{1}{6} + \frac{1}{6}\right) \right. \nonumber \\ & & \left. + 4\times\left( \frac{1}{8} + \frac{1}{8} + \frac{1}{4} + \frac{1}{4}\right) + \left( \frac{1}{3} + \frac{1}{3} + \frac{1}{3} + \frac{1}{3} + \frac{1}{3} + \frac{1}{3} + \frac{1}{3} + \frac{1}{3}\right) \right. \nonumber \\ & & \left. + \left( \frac{1}{3} + \frac{1}{3} + \frac{1}{3} + \frac{1}{3} + \frac{1}{3} + \frac{1}{3} + \frac{1}{3} + \frac{1}{3}\right) \right\} \nonumber \\ & & = \frac{19}{18} \nonumber
\end{eqnarray}

By combining \eqref{e.EKZ-L-tilde} and \eqref{e.SL2Z-L-tilde}, we conclude that 
$$\theta_1+\dots+\theta_{11} = \frac{35}{18} + \frac{19}{18} = 3$$
This completes the proof of the theorem.
\end{proof}

\begin{remark} The homological dimension of the unique $SL(2,\mathbb{R})$-probability measure supported on $SL(2,\mathbb{R})\cdot \widetilde{L}$ in the sense of Forni's paper \cite{Fo11} is $8$: this is not hard to deduce from the description of $SL(2,\mathbb{Z})\cdot\widetilde{L}$ given above. In particular, Forni's criterion in \cite{Fo11} says that the KZ cocycle over $SL(2,\mathbb{R})\cdot\widetilde{L}$ has $8$ positive Lyapunov exponents at least. In other terms, Forni's criterion falls short from predicting the correct number (i.e., $9$) of positive Lyapunov exponents in the particular case of the KZ cocycle over $SL(2,\mathbb{R})\cdot\widetilde{L}$. 
\end{remark}

\subsection{A dicothomy for the monodromy of the KZ cocycle on \texorpdfstring{$W_{\chi_2}$}{Wchi2}}\label{ss.dichotomy}
Recall from Remark \ref{r.semisimplicity} that we can split $W_{\chi_2}$ into a direct sum of $\textrm{Aff}_{***}(\widetilde{L})$-irreducible symplectic $\textrm{Aut}(\widetilde{L})$-submodules, where $\textrm{Aff}_{***}(\widetilde{L})$ is a finite-index subgroup of $\textrm{Aff}(\widetilde{L})$. Since $W_{\chi_2}\simeq 3\chi_2$, this gives us three possibilities: 
\begin{itemize}
\item $\textrm{Aff}_{***}(\widetilde{L})$ acts irreducibily on $W_{\chi_2}$;
\item $W_{\chi_2} = U \oplus V$ where $U$ and $V$ are $\textrm{Aff}_{***}(\widetilde{L})$-irreducible $\textrm{Aut}(\widetilde{L})$-submodules isomorphic to $U\simeq 2\chi_2$ and $V\simeq \chi_2$;
\item $W_{\chi_2} = W'\oplus W''\oplus W'''$ where $W'$, $W''$ and $W'''$ are $\textrm{Aff}_{***}(\widetilde{L})$-irreducible $\textrm{Aut}(\widetilde{L})$-submodules isomorphic to $W'\simeq W''\simeq W'''\simeq \chi_2$.
\end{itemize}
Furthermore, the discussion in \autoref{subsec:loc_syst}, as well as the classification of the monodromy groups of the KZ cocycle in \cite{Filip2} imply that the Zariski closure of the monodromy group (modulo compact and finite-index factors) of the restriction of the KZ cocycle to $W_{\chi_2}$ is:
\begin{itemize}
\item $SO^*(6)$ if $\textrm{Aff}_{***}(\widetilde{L})$ acts irreducibly on $W_{\chi_2}$; 
\item a subgroup of $SO^*(4)\times SO^*(2)$ if $W_{\chi_2} = U\oplus V$ for some $\textrm{Aff}_{***}(\widetilde{L})$-irreducible $\textrm{Aut}(\widetilde{L})$-submodules $U$ and $V$;
\item a subgroup of $SO^*(2)\times SO^*(2)\times SO^*(2)$ if $W_{\chi_2} = W'\oplus W''\oplus W'''$ for some $\textrm{Aff}_{***}(\widetilde{L})$-irreducible $\textrm{Aut}(\widetilde{L})$-submodules $W'$, $W''$ and $W'''$.
\end{itemize} 

We affirm that the possibility in the last item above can not occur.
Indeed, the situation described by this item would imply that all Lyapunov exponents in $W_{\chi_2}$ vanish, a contradiction with \autoref{t.EKZ-L-tilde} saying that the Lyapunov spectrum of the KZ cocycle on $W_{\chi_2}$ contains $\lambda = 1/6$. 

In summary, we showed the following result:
\begin{theorem}\label{t.FFM1} The Zariski closure of the monodromy group of the restriction of the KZ cocycle to $W_{\chi_2}$ is given by one of the next two possibilities:
\begin{itemize}
\item either $SO^*(6)$ if $\textrm{Aff}_{***}(\widetilde{L})$ acts irreducibly on $W_{\chi_2}$,  
\item or a subgroup of $SO^*(4)\times SO^*(2)$ if $W_{\chi_2} = U\oplus V$ for some $\textrm{Aff}_{***}(\widetilde{L})$-irreducible $\textrm{Aut}(\widetilde{L})$-submodules $U\simeq 2\chi_2$ and $V\simeq \chi_2$.
\end{itemize}
\end{theorem}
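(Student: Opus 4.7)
The plan is to assemble \autoref{t.FFM1} from three ingredients already established in the paper: Deligne's semisimplicity (\autoref{r.semisimplicity}), the general structure of local systems with symmetries from \autoref{subsec:loc_syst} combined with Filip's classification, and the numerical input of \autoref{t.EKZ-L-tilde}. The structure of the proof is essentially a tabulation of the three possible refinements of $W_{\chi_2}\simeq 3\chi_2$ followed by the elimination of one case via positivity of a Lyapunov exponent.

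First I would apply Deligne's semisimplicity to decompose $W_{\chi_2}$ as a direct sum of $\textrm{Aff}_{***}(\widetilde{L})$-irreducible symplectic $\textrm{Aut}(\widetilde{L})$-submodules. Because $\chi_2$ is irreducible and quaternionic of real dimension four, each summand must itself be $\chi_2$-isotypical, so the possible multiplicities partition $3$. This yields exactly the three cases listed just before the theorem statement: one piece of multiplicity $3$, a $(2,1)$-split $U\oplus V$, or a $(1,1,1)$-split $W'\oplus W''\oplus W'''$.

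Next I would read off the candidate Zariski closures of the monodromy on each piece. By \autoref{subsec:loc_syst}, on an isotypical piece with quaternionic multiplicity $r$ the monodromy acts on a space of isotypical vectors of rank $r$ over $\mathbb{H}$, commuting with the $\mathbb{H}$-action and preserving the induced symplectic form; by the description in \autoref{subsec:SOstar} and item (iv) of Filip's classification recalled in \autoref{s.introduction}, the Zariski closure, modulo compact and finite-index factors, is forced into $SO^*(2r)$. Applying this to the three cases gives the candidate groups $SO^*(6)$, $SO^*(4)\times SO^*(2)$, and $SO^*(2)\times SO^*(2)\times SO^*(2)$ respectively.

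The only case requiring a specific input from the example is the $(1,1,1)$-split, which I would rule out as follows. The real Lie group $SO^*(2)$ is isomorphic to $U(1)$, hence compact, so a monodromy sitting inside a product of three copies of $SO^*(2)$ would be contained in a compact group and every Lyapunov exponent on $W_{\chi_2}$ would be forced to vanish. However, the explicit Lyapunov spectrum on $H_1^+(\widetilde{L},\mathbb{R})$ contributes $1+4\cdot(1/3)=7/3$ to the sum of non-negative exponents, whereas \autoref{t.EKZ-L-tilde} computes the total sum to be $3$; subtracting shows that $W_{\chi_2}$ contributes $2/3$, which together with the forced quaternionic multiplicity four produces the positive exponent $\lambda=1/6$. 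This contradicts the triple-split scenario and leaves only the first two possibilities. I do not expect a genuine obstacle in this argument: the step requiring the most care is checking that the quaternionic nature of $\chi_2$ really pins the closure into $SO^*$-type rather than a symplectic or unitary group, but this is exactly what the isotypical decomposition of \autoref{subsec:loc_syst} together with \autoref{subsec:SOstar} is designed to deliver; the compactness of $SO^*(2)$ is classical.
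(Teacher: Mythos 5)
Your proposal reproduces the paper's argument essentially verbatim: invoke Deligne/M\"oller semisimplicity to enumerate the three possible decompositions of $W_{\chi_2}\simeq 3\chi_2$ into $\textrm{Aff}_{***}(\widetilde{L})$-irreducible pieces, map each case through the local-systems-with-symmetries discussion and Filip's classification to a candidate Zariski closure inside a product of $SO^*$-groups, and rule out the triple split because $SO^*(2)\cong U(1)$ is compact while \autoref{t.EKZ-L-tilde} forces the positive exponent $\lambda=1/6$ on $W_{\chi_2}$. Your added remark spelling out why the triple-split case gives vanishing exponents (compactness of $SO^*(2)$) is the same reasoning the paper uses, just stated more explicitly at this point rather than deferred to \autoref{s.irreducibility-exotic-monodromy}.
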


\begin{remark}
This result suffices to show that an ``exotic'' monodromy group of the type described in item (iv) of \autoref{s.introduction} occurs for the restriction to $W_{\chi_2}$ of the KZ cocycle over $SL(2,\mathbb{R})\cdot\widetilde{L}$. Nevertheless, we will complete the statement of \autoref{t.FFM1} by showing (in next section) that $\textrm{Aff}_{***}(\widetilde{L})$ acts irreducibly on $W_{\chi_2}$.
\end{remark}

\section{\texorpdfstring{$\textrm{Aff}_{***}(\widetilde{L})$}{Aff***(widetildeL)}-irreducibility of \texorpdfstring{$W_{\chi_2}$}{chi2} }
\label{s.irreducibility-exotic-monodromy}

In this section, we improve \autoref{t.FFM1} by showing the following result:

\begin{theorem}\label{t.FFM2} Any finite-index subgroup $\textrm{Aff}_{***}(\widetilde{L})$ of $\textrm{Aff}(\widetilde{L})$ acts irreducibly on $W_{\chi_2}$. In particular, the monodromy group of the KZ cocycle on $W_{\chi_2}$ is $SO^*(6)$.
\end{theorem}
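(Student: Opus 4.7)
By \autoref{t.FFM1}, the monodromy on $W_{\chi_2}$ is already known to be either the full $SO^*(6)$ or to lie inside $SO^*(4)\times SO^*(2)$, the latter corresponding to an $\textrm{Aff}_{***}(\widetilde{L})$-invariant splitting $W_{\chi_2}=U\oplus V$ with $U\simeq 2\chi_2$ and $V\simeq \chi_2$ (the 1+1+1 splitting was already excluded on Lyapunov-spectrum grounds). The plan is to exhibit explicit elements of $\textrm{Aff}(\widetilde{L})$ whose actions on $W_{\chi_2}$ are incompatible with any such 2+1 decomposition. Since $\textrm{Aff}_{***}(\widetilde{L})$ is finite index in $\textrm{Aff}(\widetilde{L})$, appropriate powers of the chosen elements lie in $\textrm{Aff}_{***}(\widetilde{L})$, so invariance of a putative splitting under $\textrm{Aff}_{***}(\widetilde{L})$ would force invariance under these powers, which we will rule out directly.

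First, I would translate the problem into quaternionic linear algebra using the framework of \autoref{subsec:loc_syst}. Because $W_{\chi_2}\simeq 3\chi_2$ and $\chi_2$ is quaternionic with $A_{\chi_2}=\mathbb{H}$, the space of isotypical components $\mathbb{V}_{\chi_2}=\Hom_G(R_{\chi_2},W_{\chi_2})$ is a right $\mathbb{H}$-module of quaternionic rank $3$, and the action of any element of $\textrm{Aff}_{**}(\widetilde{L})$ on $W_{\chi_2}$ corresponds to a $3\times 3$ quaternionic matrix on $\mathbb{V}_{\chi_2}$ preserving the induced quaternionic skew-hermitian form. Any $\textrm{Aff}_{***}(\widetilde{L})$-invariant splitting $W_{\chi_2}=U\oplus V$ with $U\simeq 2\chi_2$, $V\simeq \chi_2$ corresponds to an $\mathbb{H}$-stable decomposition $\mathbb{V}_{\chi_2}=\mathbb{U}\oplus\mathbb{V}$ of quaternionic dimensions $2+1$. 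Equivalently, the matrices representing the action must all have a common $1$-dimensional $\mathbb{H}$-invariant subspace in $\mathbb{V}_{\chi_2}$.

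Second, I would pick concrete affine homeomorphisms of $\widetilde{L}$, for instance the parabolic elements corresponding to horizontal and vertical Dehn multitwists whose linear parts are the generators of the Veech group. Their action on $H_1(\widetilde{L},\mathbb{R})$ is computed directly from the horizontal and vertical cylinder decompositions of $\widetilde{L}$ (and is well understood for covers of the $L$-shaped origami $L^0$). Projecting onto the $-1$-eigenspace of the central element of $\textrm{Aut}(\widetilde{L})\simeq Q$ isolates $H_1^-(\widetilde{L},\mathbb{R})=W_{\chi_2}$, and using the commuting $Q$-action one identifies $\mathbb{V}_{\chi_2}$ and reads off the corresponding $3\times 3$ quaternionic matrices $M_1,M_2$.

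Finally, to conclude, I would verify that $M_1$ and $M_2$ have no common $\mathbb{H}$-invariant line in $\mathbb{H}^3$ (and, dually, no common invariant hyperplane). This is a finite linear-algebra check: compute the one-dimensional $\mathbb{H}$-subspaces fixed by $M_1$ (if any), and verify that $M_2$ does not preserve any of them. The main obstacle is the bookkeeping in Step 2: choosing an explicit basis of $H_1(\widetilde{L},\mathbb{R})$ adapted to the $Q$-action so that the identification $W_{\chi_2}\cong \mathbb{H}^3$ is concrete enough to carry out the final check, and making sure the chosen affine homeomorphisms really lie in $\textrm{Aff}_{**}(\widetilde{L})$ so that the $3\times 3$ quaternionic description applies (otherwise one must first replace them by appropriate powers). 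Once the matrices are on the table, the noninvariance argument is mechanical.
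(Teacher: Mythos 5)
Your high-level strategy matches the paper's: reduce via \autoref{t.FFM1} to ruling out an $\textrm{Aff}_{***}(\widetilde{L})$-invariant splitting $W_{\chi_2}=U\oplus V$ with $U\simeq 2\chi_2$ and $V\simeq\chi_2$, then disprove it by computing the action of explicit affine homeomorphisms on $W_{\chi_2}$ in a concrete basis and observing that no such splitting can be simultaneously respected. The ``take powers to land in $\textrm{Aff}_{***}$'' point is also correctly anticipated.

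However, there is a gap in the middle of your argument that the paper's proof is specifically engineered to avoid, and as stated your plan would not run cleanly. You propose to use parabolic elements (horizontal/vertical Dehn multitwists) and then ``verify that $M_1$ and $M_2$ have no common $\mathbb{H}$-invariant line.'' The difficulty is that a unipotent quaternionic $3\times 3$ matrix typically has a \emph{continuum} of invariant $\mathbb{H}$-lines (whenever the Jordan structure is not a single block), so ``check there is no common invariant line'' is not a finite verification and you have no a priori handle on which line could play the role of $V$. The paper sidesteps this by exploiting that $SO^*(2)\cong U(1)$ is \emph{compact}, so on the rank-one factor $V$ every $\textrm{Aff}_{***}$-element acts with unit-modulus eigenvalues. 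Consequently, if one can produce an element of $\textrm{Aff}(\widetilde{L})$ whose action on $W_{\chi_2}$ has three eigenvalues of multiplicity four with pairwise distinct moduli, then $V$ is forced to be the \emph{central} (modulus-one) eigenspace of that element --- a single, explicitly computable $4$-dimensional real subspace. To get such ``simple spectrum'' elements one cannot use a single multitwist (parabolic $\Rightarrow$ unipotent on homology); the paper instead multiplies multitwists in transverse directions, e.g.\ $A\cdot B$ and $C\cdot B$, getting hyperbolic elements with characteristic polynomials $(x-1)^4(x^2-6x+1)^4$ and $(x-1)^4(x^2-10x+1)^4$. It then checks that the two central eigenspaces are distinct, which is an honest finite linear-algebra check and yields the contradiction. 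So: your proposal is in the right family of ideas, but you should replace ``pick two parabolic multitwists and look for a common invariant $\mathbb{H}$-line'' by ``pick products of multitwists in transverse directions so that the resulting homology matrices have three eigenvalue moduli, use the compactness of $SO^*(2)$ to identify $V$ with the central eigenspace, and show two such central eigenspaces disagree.''
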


Note that \autoref{t.FFM} is a direct consequence of this theorem, so that our task is reduced to prove \autoref{t.FFM2}.
In this direction, we will compute the action on $W_{\chi_2}$ of certain Dehn multitwists in an adequate basis of $W_{\chi_2}$. 

\subsection{A choice of basis of \texorpdfstring{$W_{\chi_2}$}{Wchi2}} The cycles $\mu_g$, $\sigma_g$, $\nu_g$, $\zeta_g$ displayed in \autoref{f.1} form a generating set of the relative homology group $H_1(\widetilde{L}, \Sigma,\mathbb{R})$. Note that we have a relation 
$$\square_g:=\mu_g + \sigma_g + \nu_{gi} - \sigma_{gj} + \zeta_{gi} - \mu_{gj} - \zeta_g - \nu_g = 0$$ 
for each $L_g$, $g\in Q$. Observe that this gives us $7$ independent\footnote{In particular, we can extract from the generating set $\{\mu_g, \sigma_g, \nu_g, \zeta_g\}_{g\in Q}$ of $32$ cycles a basis of $H_1(\widetilde{L}, \Sigma, \mathbb{R})$ with $32-7=25$ cycles. Of course, this is coherent with the facts that $\widetilde{L}$ has genus $11$ and $\#\Sigma=4$, so that $H_1(\widetilde{L}, \Sigma, \mathbb{R})$ has dimension $2\times 11 + 4 - 1 = 25$.} relations because $\sum\limits_{g\in Q}\square_g=0$. 

It is not difficult to check that, for each $g\in Q$, the cycles $\sigma_g$ and $\zeta_g$ are absolute cycles, and the cycles $\mu_g$ and $\nu_g$ are relative cycles with boundaries
$$\partial\mu_g = \overline{gi} - \overline{g} \quad \textrm{ and } \quad \partial\nu_g = \overline{gj} - \overline{g}$$

For each $g\in Q$, let us consider the following absolute cycles:
$$\widehat{\mu}_g := \mu_g - \mu_{-g}, \quad \widehat{\sigma}_g := \sigma_g - \sigma_{-g},$$ $$\widehat{\nu}_g := \nu_g - \nu_{-g} \quad \textrm{ and } \quad \widehat{\zeta}_g := \zeta_g - \zeta_{-g}$$
By definition, $\{\widehat{\mu}_g, \widehat{\sigma}_g, \widehat{\nu}_g, \widehat{\zeta}_g\}_{g\in Q}$ is a generating set for $W_{\chi_2}$, and it is not hard to verify that the list 
$$\mathcal{B} = \{\widehat{\sigma}_1, \widehat{\sigma}_i, \widehat{\sigma}_j, \widehat{\sigma}_k, \widehat{\zeta}_1, \widehat{\zeta}_i, \widehat{\zeta}_j, \widehat{\zeta}_k, \widehat{\mu}_1, \widehat{\mu}_i, \widehat{\nu}_1, \widehat{\nu}_j\}$$ 
of $12$ absolute cycles is a basis of $W_{\chi_2}\simeq 3\chi_2$. 

For later use, let us express $\widehat{\mu}_j$, $\widehat{\mu}_k$, $\widehat{\nu}_i$, $\widehat{\nu}_k$ in terms of the basis $\mathcal{B}$. In this direction, we expand the relations $\square_g-\square_{-g}=0$ for $g=1, i, j, k$: 
\begin{equation}\label{e.relation-hat-1}
\widehat{\mu}_1 + \widehat{\sigma}_1 +\widehat{\nu}_i - \widehat{\sigma}_j + \widehat{\zeta}_i - \widehat{\mu}_j - \widehat{\zeta}_1 - \widehat{\nu}_1 = 0
\end{equation}
\begin{equation}\label{e.relation-hat-i}
\widehat{\mu}_i + \widehat{\sigma}_i - \widehat{\nu}_1 - \widehat{\sigma}_k - \widehat{\zeta}_1 - \widehat{\mu}_k - \widehat{\zeta}_i - \widehat{\nu}_i = 0
\end{equation}
\begin{equation}\label{e.relation-hat-j}
\widehat{\mu}_j + \widehat{\sigma}_j - \widehat{\nu}_k + \widehat{\sigma}_1 - \widehat{\zeta}_k + \widehat{\mu}_1 - \widehat{\zeta}_j - \widehat{\nu}_j = 0
\end{equation}
\begin{equation}\label{e.relation-hat-k}
\widehat{\mu}_k + \widehat{\sigma}_k + \widehat{\nu}_j + \widehat{\sigma}_i + \widehat{\zeta}_j + \widehat{\mu}_i - \widehat{\zeta}_k - \widehat{\nu}_k = 0
\end{equation}

By adding together these four equations, we deduce that 
$$2\widehat{\sigma}_1 + 2\widehat{\sigma}_i - 2\widehat{\zeta}_1 - 2\widehat{\zeta}_k + 2\widehat{\mu}_1 + 2\widehat{\mu}_i - 2\widehat{\nu}_1 - 2\widehat{\nu}_k = 0,$$
that is, 
\begin{equation}\label{e.nu-hat-k}
\widehat{\nu}_k = \widehat{\sigma}_1 + \widehat{\sigma}_i - \widehat{\zeta}_1 - \widehat{\zeta}_k + \widehat{\mu}_1 + \widehat{\mu}_i - \widehat{\nu}_1
\end{equation}

By substituting \eqref{e.nu-hat-k} into \eqref{e.relation-hat-j} and \eqref{e.relation-hat-k}, we obtain that: 
\begin{equation}\label{e.mu-hat-j}
\widehat{\mu}_j  =  \widehat{\sigma}_i - \widehat{\sigma}_j - \widehat{\zeta}_1 + \widehat{\zeta}_j + \widehat{\mu}_i - \widehat{\nu}_1 + \widehat{\nu}_j
\end{equation}
\begin{equation}\label{e.mu-hat-k}
\widehat{\mu}_k = \widehat{\sigma}_1 - \widehat{\sigma}_k - \widehat{\zeta}_1 - \widehat{\zeta}_j + \widehat{\mu}_1 - \widehat{\nu}_1 - \widehat{\nu}_j
\end{equation}

Finally, by substituting \eqref{e.mu-hat-j} into \eqref{e.relation-hat-1} (or equivalently \eqref{e.mu-hat-k} into \eqref{e.relation-hat-i}), we get that: 
\begin{equation}\label{e.nu-hat-i}
\widehat{\nu}_i = - \widehat{\sigma}_1 + \widehat{\sigma}_i - \widehat{\zeta}_i + \widehat{\zeta}_j - \widehat{\mu}_1 + \widehat{\mu}_i + \widehat{\nu}_j
\end{equation}

\subsection{Dehn multitwist in the direction \texorpdfstring{$(1,1)$}{11}}
The straight lines in the direction $(1,1)$ decompose $\widetilde{L}$ into eight cylinders $a+, a-, b+, b-, c+, c-, d+, d-$ such that: 
\begin{itemize}
\item for each $\ast\in\{a, b, c, d\}$, the cylinder $\ast-$ is the image of $\ast+$ under the automorphism $-1\in Q$; 
\item the cylinders $a+$, $b+$ and $c+$ (resp.) cross the cycles $\mu_1$, $\nu_1$ and $\zeta_1$ (resp.) in $L_1$, and the cylinder $d+$ crosses the cycle $\nu_j$ in $L_j$.
\end{itemize}

Note that the ratio between the height and the width of each of these cylinders is $1/3$. Thus, the matrix $\underline{A}\in SL(2,\mathbb{Z})$ such that $\underline{A}(1,1) = (1,1)$ and $\underline{A}(1,0) = (1,0) + 3 (1,1) = (4,3)$, i.e., $\underline{A} = \left(\begin{array}{cc} 4 & -3 \\ 3 & -2\end{array}\right)$ belongs to the Veech group $SL(\widetilde{L})$. By a slight abuse of notation, we also call $\underline{A}$ the element of the affine group $\textrm{Aff}(\widetilde{L})$ with derivative $\underline{A}\in SL(2,\mathbb{Z})$ fixing pointwise the elements of $\Sigma$. 

For each  $\ast\in\{a, b, c, d\}$, let $\rho_{\ast+}$, resp. $\rho_{\ast-}$, be the homology class of the waist curve of the cylinder $\ast+$, resp. $\ast-$. Observe that the absolute cycle $\rho_{\ast-}$ is the image under the automorphism $-1\in Q$ of the absolute cycle $\rho_{\ast+}$. In particular, the cycles $\widehat{\rho}_{\ast} := \rho_{\ast+} - \rho_{\ast-}$ belong to $W_{\chi_2}$.

Denote by $A$ the action on the relative homology group $H_1(\widetilde{L}, \Sigma, \mathbb{R})$ induced by $\underline{A}$. We have that:   
\begin{eqnarray*}
& & A(\sigma_1) = \sigma_1 + \rho_{c-}, A(\sigma_i) = \sigma_i + \rho_{b-}, \\ 
& & A(\sigma_j) = \sigma_j + \rho_{a+}, A(\sigma_k) = \sigma_k + \rho_{d+}, \nonumber \\
& & A(\zeta_1) = \zeta_1 - \rho_{c+}, A(\zeta_i) = \zeta_i - \rho_{b+}, \nonumber \\ 
& & A(\zeta_j) = \zeta_j - \rho_{a-}, A(\zeta_k) = \zeta_k - \rho_{d-}, \nonumber \\ 
& & A(\mu_1) = \mu_1 + \rho_{a+}, A(\mu_i) = \mu_i + \rho_{d+}, \nonumber \\ 
& & A(\nu_1) = \nu_1 - \rho_{b+}, A(\nu_j) = \nu_j - \rho_{d+}. \nonumber
\end{eqnarray*}

Therefore, the action of $A$ on $W_{\chi_2}$ is described by the formulas: 
\begin{eqnarray}\label{e.A1}
& & A(\widehat{\sigma}_1) = \widehat{\sigma}_1 - \widehat{\rho}_{c}, A(\widehat{\sigma}_i) = \widehat{\sigma}_i - \widehat{\rho}_{b},  A(\widehat{\sigma}_j) = \widehat{\sigma}_j + \widehat{\rho}_{a}, A(\widehat{\sigma}_k) = \widehat{\sigma}_k + \widehat{\rho}_{d},  \\
& & A(\widehat{\zeta}_1) = \widehat{\zeta}_1 - \widehat{\rho}_{c}, A(\widehat{\zeta}_i) = \widehat{\zeta}_i - \widehat{\rho}_{b}, A(\widehat{\zeta}_j) = \widehat{\zeta}_j + \widehat{\rho}_{a}, A(\widehat{\zeta}_k) = \widehat{\zeta}_k + \widehat{\rho}_{d}, \nonumber \\ 
& & A(\widehat{\mu}_1) = \widehat{\mu}_1 + \widehat{\rho}_{a}, A(\widehat{\mu}_i) = \widehat{\mu}_i + \widehat{\rho}_{d}, A(\widehat{\nu}_1) = \widehat{\nu}_1 - \widehat{\rho}_{b}, A(\widehat{\nu}_j) = \widehat{\nu}_j - \widehat{\rho}_{d}. \nonumber
\end{eqnarray}

This information allows to write the matrix of $A$ in our preferred basis $\mathcal{B}$ once we express each $\widehat{\rho}_{\ast}$, $\ast\in\{a, b, c, d\}$ in terms of the elements of $\mathcal{B}$. In this direction, we note that the symmetry provided by automorphism $-1\in Q$ reduces this task to calculate $\rho_{\ast+}$, $\ast\in\{a, b, c, d\}$. 

A direct inspection of \autoref{f.1} reveals that 
$$\rho_{a+} = \mu_1 + \sigma_1 + \nu_i + \nu_{-k} + \zeta_{-k} + \mu_i$$ 
$$\rho_{b+} = \zeta_1 + \mu_j + \mu_k + \sigma_k + \nu_j + \nu_1$$ 
$$\rho_{c+} = \mu_j + \sigma_j + \nu_{-k} + \nu_{-i} + \zeta_{-i} + \mu_{-k}$$ 
$$\rho_{d+} = \zeta_j + \mu_{-1} + \mu_i + \sigma_i + \nu_{-1} + \nu_j$$ 

Hence, 
\begin{eqnarray*}
\widehat{\rho}_a & = & \widehat{\sigma}_1 - \widehat{\zeta}_k + \widehat{\mu}_1 + \widehat{\mu}_i + \widehat{\nu}_i - \widehat{\nu}_k \\ 
\widehat{\rho}_b & = & \widehat{\sigma}_k + \widehat{\zeta}_1 + \widehat{\mu}_j + \widehat{\mu}_k + \widehat{\nu}_1 + \widehat{\nu}_j \\ 
\widehat{\rho}_c & = & \widehat{\sigma}_j -  \widehat{\zeta}_i +  \widehat{\mu}_j -  \widehat{\mu}_k - \widehat{\nu}_i - \widehat{\nu}_k \\
\widehat{\rho}_d & = & \widehat{\sigma}_i + \widehat{\zeta}_j - \widehat{\mu}_1 + \widehat{\mu}_i -\widehat{\nu}_1 + \widehat{\nu}_j 
\end{eqnarray*}

By substituting \eqref{e.nu-hat-k}, \eqref{e.mu-hat-j}, \eqref{e.mu-hat-k} and \eqref{e.nu-hat-i} into the previous equations, we deduce that the formulas for $\widehat{\rho}_{\ast}$, $\ast\in\{a, b, c, d\}$ in terms of $\mathcal{B}$ are:

\begin{eqnarray}\label{e.rho-hat-B}
\widehat{\rho}_a & = & -\widehat{\sigma}_1 + \widehat{\zeta}_1 - \widehat{\zeta}_i + \widehat{\zeta}_j - \widehat{\mu}_1 + \widehat{\mu}_i + \widehat{\nu}_1 + \widehat{\nu}_j \\ 
\widehat{\rho}_b & = & \widehat{\sigma}_1 + \widehat{\sigma}_i - \widehat{\sigma}_j - \widehat{\zeta}_1 + \widehat{\mu}_1 + \widehat{\mu}_i - \widehat{\nu}_1 + \widehat{\nu}_j \nonumber \\ 
\widehat{\rho}_c & = & - \widehat{\sigma}_1 - \widehat{\sigma}_i + \widehat{\sigma}_k + \widehat{\zeta}_1 + \widehat{\zeta}_j + \widehat{\zeta}_k - \widehat{\mu}_1 - \widehat{\mu}_i +  \widehat{\nu}_1 + \widehat{\nu}_j \nonumber \\
\widehat{\rho}_d & = & \widehat{\sigma}_i + \widehat{\zeta}_j - \widehat{\mu}_1 + \widehat{\mu}_i -\widehat{\nu}_1 + \widehat{\nu}_j \nonumber 
\end{eqnarray}

By combining \eqref{e.A1} and \eqref{e.rho-hat-B}, we obtain that the matrix of $A$ in the basis $\mathcal{B}$ is:

\begin{equation}\label{e.A2} 
A=\left(\begin{array}{cccccccccccc}
2 & -1 & -1 & 0 & 1 & -1 & -1 & 0 & -1 & 0 & -1 & 0 \\
1 & 0 & 0 & 1 & 1 & -1 & 0 & 1 & 0 & 1 & -1 & -1 \\
0 & 1 & 1 & 0 & 0 & 1 & 0 & 0 & 0 & 0 & 1 & 0 \\
-1 & 0 & 0 & 1 & -1 & 0 & 0 & 0 & 0 & 0 & 0 & 0 \\
-1 & 1 & 1 & 0 & 0 & 1 & 1 & 0 & 1 & 0 & 1 & 0 \\
0 & 0 & -1 & 0 & 0 & 1 & -1 & 0 & -1 & 0 & 0 & 0 \\
-1 & 0 & 1 & 1 & -1 & 0 & 2 & 1 & 1 & 1 & 0 & -1 \\
-1 & 0 & 0 & 0 & -1 & 0 & 0 & 1 & 0 & 0 & 0 & 0 \\
1 & -1 & -1 & -1 & 1 & -1 & -1 & -1 & 0 & -1 & -1 & 1 \\
1 & -1 & 1 & 1 & 1 & -1 & 1 & 1 & 1 & 2 & -1 & -1 \\
-1 & 1 & 1 & -1 & -1 & 1 & 1 & -1 & 1 & -1 & 2 & 1 \\
-1 & -1 & 1 & 1 & -1 & -1 & 1 & 1 & 1 & 1 & -1 & 0
\end{array}\right)
\end{equation}

\subsection{Dehn multitwist in the direction \texorpdfstring{$(3,-1)$}{31}}
The straight lines in the direction $(3,-1)$ decompose $\widetilde{L}$ into eight cylinders $\alpha+, \alpha-, \beta+, \beta-, \gamma+, \gamma-, \delta+, \delta-$ such that: 
\begin{itemize}
\item for each $\ast\in\{\alpha, \beta, \gamma, \delta\}$, the cylinder $\ast-$ is the image of $\ast+$ under the automorphism $-1\in Q$; 
\item the cylinders $\alpha+$ and $\beta+$ (resp.) cross the cycles $\mu_1$ and $\sigma_1$ (resp.) in $L_1$, and the cylinder $\gamma+$, resp. $\delta+$, crosses the cycles $\nu_1$ and $\nu_i$, resp. $\zeta_1$ and $\nu_i$ in $L_1$.
\end{itemize}

Note that the matrix $\underline{B}\in SL(2,\mathbb{Z})$ such that $\underline{B}(3,-1) = (3,-1)$ and $\underline{B}(1,0) = (1,0) + 3 (3,-1) = (10,-3)$, i.e., $\underline{B} = \left(\begin{array}{cc} 10 & 27 \\ -3 & -8\end{array}\right)$ belongs to the Veech group $SL(\widetilde{L})$. By a slight abuse of notation, we also call $\underline{B}$ the element of the affine group $\textrm{Aff}(\widetilde{L})$ with derivative $\underline{B}\in SL(2,\mathbb{Z})$ fixing pointwise the elements of $\Sigma$. 

For each  $\ast\in\{\alpha, \beta, \gamma, \delta\}$, let $\rho_{\ast+}$, resp. $\rho_{\ast-}$, be the homology class of the waist curve of the cylinder $\ast+$, resp. $\ast-$. Observe that the absolute cycle $\rho_{\ast-}$ is the image under the automorphism $-1\in Q$ of the absolute cycle $\rho_{\ast+}$. In particular, the cycles $\widehat{\rho}_{\ast} := \rho_{\ast+} - \rho_{\ast-}$ belong to $W_{\chi_2}$.

Denote by $B$ the action on the relative homology group $H_1(\widetilde{L}, \Sigma, \mathbb{R})$ induced by $\underline{B}$. We have that:
\begin{eqnarray*}
& & B(\sigma_1) = \sigma_1 + \rho_{\beta+}, B(\sigma_i) = \sigma_i + \rho_{\delta+}\\
& & B(\sigma_j) = \sigma_j + \rho_{\alpha-}, B(\sigma_k) = \sigma_k + \rho_{\gamma-}, \nonumber \\
& & B(\zeta_1) = \zeta_1 + \rho_{\delta+} + \rho_{\beta-} + \rho_{\delta-}, B(\zeta_i) = \zeta_i + \rho_{\beta-} + \rho_{\delta-} + \rho_{\beta+}, \nonumber \\ 
& & B(\zeta_j) = \zeta_j + \rho_{\gamma+} + \rho_{\alpha+} + \rho_{\gamma-}, B(\zeta_k) = \zeta_k + \rho_{\alpha-} + \rho_{\gamma+} + + \rho_{\alpha+}, \nonumber \\ 
& & B(\mu_1) = \mu_1 + \rho_{\alpha+}, B(\mu_i) = \mu_i + \rho_{\gamma+}, \nonumber \\ 
& & B(\nu_1) = \nu_1 + \rho_{\alpha+} + \rho_{\beta+} + \rho_{\gamma+}, B(\nu_j) = \nu_j + \rho_{\beta+} + \rho_{\alpha-} + \rho_{\delta-}. \nonumber
\end{eqnarray*}

Therefore, the action of $B$ on $W_{\chi_2}$ is described by the formulas: 
\begin{eqnarray}\label{e.B1}
& & B(\widehat{\sigma}_1) = \widehat{\sigma}_1 + \widehat{\rho}_{\beta}, B(\widehat{\sigma}_i) = \widehat{\sigma}_i + \widehat{\rho}_{\delta}, \\ 
& & B(\widehat{\sigma}_j) = \widehat{\sigma}_j - \widehat{\rho}_{\alpha}, B(\widehat{\sigma}_k) = \widehat{\sigma}_k - \widehat{\rho}_{\gamma},  \nonumber \\
& & B(\widehat{\zeta}_1) = \widehat{\zeta}_1 - \widehat{\rho}_{\beta}, B(\widehat{\zeta}_i) = \widehat{\zeta}_i - \widehat{\rho}_{\delta}, \nonumber \\ 
& & B(\widehat{\zeta}_j) = \widehat{\zeta}_j + \widehat{\rho}_{\alpha}, B(\widehat{\zeta}_k) = \widehat{\zeta}_k + \widehat{\rho}_{\gamma}, \nonumber \\ 
& & B(\widehat{\mu}_1) = \widehat{\mu}_1 + \widehat{\rho}_{\alpha}, B(\widehat{\mu}_i) = \widehat{\mu}_i + \widehat{\rho}_{\gamma}, \nonumber \\ 
& & B(\widehat{\nu}_1) = \widehat{\nu}_1 + \widehat{\rho}_{\alpha} + \widehat{\rho}_{\beta} + \widehat{\rho}_{\gamma}, B(\widehat{\nu}_j) = \widehat{\nu}_j - \widehat{\rho}_{\alpha} + \widehat{\rho}_{\beta} - \widehat{\rho}_{\delta}. \nonumber
\end{eqnarray}

This permits to write the matrix of $B$ in our preferred basis $\mathcal{B}$ once we express each $\widehat{\rho}_{\ast}$, $\ast\in\{\alpha, \beta, \gamma, \delta\}$ in terms of the elements of $\mathcal{B}$.
In this direction, we note that the symmetry provided by automorphism $-1\in Q$ reduces this task to calculate $\rho_{\ast+}$, $\ast\in\{\alpha, \beta, \gamma, \delta\}$. 

A direct inspection of \autoref{f.1} shows that 
\begin{align*}
\rho_{\alpha+} = &\sigma_i + \sigma_{-i} + \sigma_{-j} - \zeta_{-j} + \mu_1 + \mu_{-1} + \mu_i + 2\mu_{-i} + \mu_{-j} - \nu_{-i} - \nu_{-j}\\
\rho_{\beta+} = &\sigma_1 + \sigma_{k} + \sigma_{-k} - \zeta_{1} + \mu_1 + \mu_{j} + \mu_{-j} + 2\mu_{k} + \mu_{-k} - \nu_{1} - \nu_{k}\\
\rho_{\gamma+} = &\sigma_1 + \sigma_{-1} + \sigma_{-k} - \zeta_{-k} + 2\mu_1 + \mu_{-1} + \mu_{i} + \mu_{-i} + \mu_{-k} - \nu_{1} - \nu_{-k}\\
\rho_{\delta+} = &\sigma_i + \sigma_{j} + \sigma_{-j} - \zeta_{i} + \mu_i + \mu_{j} + 2\mu_{-j} + \mu_{k} + \mu_{-k} - \nu_{i} - \nu_{-j}
\end{align*}
Hence, 
\begin{eqnarray*}
\widehat{\rho}_{\alpha} & = & - \widehat{\sigma}_j + \widehat{\zeta}_j - \widehat{\mu}_i - \widehat{\mu}_j + \widehat{\nu}_i + \widehat{\nu}_j \\ 
\widehat{\rho}_{\beta} & = & \widehat{\sigma}_1 - \widehat{\zeta}_1 + \widehat{\mu}_1 + \widehat{\mu}_k - \widehat{\nu}_1 - \widehat{\nu}_k \\ 
\widehat{\rho}_{\gamma} & = & - \widehat{\sigma}_k + \widehat{\zeta}_k + \widehat{\mu}_1 -  \widehat{\mu}_k - \widehat{\nu}_1 + \widehat{\nu}_k \\
\widehat{\rho}_{\delta} & = & \widehat{\sigma}_i - \widehat{\zeta}_i + \widehat{\mu}_i - \widehat{\mu}_j -\widehat{\nu}_i + \widehat{\nu}_j 
\end{eqnarray*}

By substituting \eqref{e.nu-hat-k}, \eqref{e.mu-hat-j}, \eqref{e.mu-hat-k} and \eqref{e.nu-hat-i} into the previous equations, we deduce that the formulas for $\widehat{\rho}_{\ast}$, $\ast\in\{\alpha, \beta, \gamma, \delta\}$ in terms of $\mathcal{B}$ are:

\begin{eqnarray}\label{e.rho-hat-B-2}
\widehat{\rho}_{\alpha} & = & -\widehat{\sigma}_1 + \widehat{\zeta}_1 - \widehat{\zeta}_i + \widehat{\zeta}_j - \widehat{\mu}_1 - \widehat{\mu}_i + \widehat{\nu}_1 + \widehat{\nu}_j \\ 
\widehat{\rho}_{\beta} & = & \widehat{\sigma}_1 - \widehat{\sigma}_i - \widehat{\sigma}_k - \widehat{\zeta}_1 - \widehat{\zeta}_j + \widehat{\zeta}_k + \widehat{\mu}_1 - \widehat{\mu}_i - \widehat{\nu}_1 - \widehat{\nu}_j \nonumber \\ 
\widehat{\rho}_{\gamma} & = & \widehat{\sigma}_i + \widehat{\zeta}_j + \widehat{\mu}_1 + \widehat{\mu}_i -  \widehat{\nu}_1 + \widehat{\nu}_j \nonumber \\
\widehat{\rho}_{\delta} & = & \widehat{\sigma}_1 - \widehat{\sigma}_i + \widehat{\sigma}_j + \widehat{\zeta}_1 - 2\widehat{\zeta}_j + \widehat{\mu}_1 - \widehat{\mu}_i +\widehat{\nu}_1 - \widehat{\nu}_j \nonumber 
\end{eqnarray}

By combining \eqref{e.B1} and \eqref{e.rho-hat-B-2}, we see that the matrix of $B$ in the basis $\mathcal{B}$ is:
\begin{align}\label{e.B2} 
B=\left(\begin{array}{cccccccccccc}
2 & 1 & 1 & 0 & -1 & -1 & -1 & 0 & -1 & 0 & 0 & 1 \\
-1 & 0 & 0 & -1 & 1 & 1 & 0 & 1 & 0 & 1 & 0 & 0 \\
0 & 1 & 1 & 0 & 0 & -1 & 0 & 0 & 0 & 0 & 0 & -1 \\
-1 & 0 & 0 & 1 & 1 & 0 & 0 & 0 & 0 & 0 & -1 & -1 \\
-1 & 1 & -1 & 0 & 2 & -1 & 1 & 0 & 1 & 0 & 0 & -3 \\
0 & 0 & 1 & 0 & 0 & 1 & -1 & 0 & -1 & 0 & -1 & 1 \\
-1 & -2 & -1 & -1 & 1 & 2 & 2 & 1 & 1 & 1 & 1 & 0 \\
1 & 0 & 0 & 0 & -1 & 0 & 0 & 1 & 0 & 0 & 1 & 1 \\
1 & 1 & 1 & -1 & -1 & -1 & -1 & 1 & 0 & 1 & 1 & 1 \\
-1 & -1 & 1 & -1 & 1 & 1 & -1 & 1 & -1 & 2 & -1 & 1 \\
-1 & 1 & -1 & 1 & 1 & -1 & 1 & -1 & 1 & -1 & 0 & -3 \\
-1 & -1 & -1 & -1 & 1 & 1 & 1 & 1 & 1 & 1 & 1 & 0
\end{array}\right) 
\end{align}

\subsection{Dehn multitwist in the direction \texorpdfstring{$(-1,3)$}{-13}}
The origami $\widetilde{L}$ has a ``symmetry'' given by the (orientation-reversing) matrix $\underline{R}=\left(\begin{array}{cc} 0 & 1 \\ 1 & 0 \end{array}\right)\in GL(2,\mathbb{Z})$. 

More precisely, by applying this matrix to each $L_g$, $g\in Q$, in \autoref{f.1}, we exchange the roles of $\mu_g$, $\mu_{gj}$, resp. $\sigma_g$, $\sigma_{gj}$, and $\nu_g$, $\nu_{gi}$, resp. $\zeta_g$, $\zeta_{gi}$.
By relabelling the sides $\nu_g$, $\zeta_g$, $\mu_g$, $\sigma_g$ (resp.) of $\underline{R}(L_g)$ as $\mu_{\phi(g)}$, $\sigma_{\phi(g)}$, $\nu_{\phi(g)}$, $\zeta_{\phi(g)}$ (resp.) where $\phi:Q\to Q$ is the (outer) automorphism of $Q$ with $\phi(i)=j$, $\phi(j)=i$, $\phi(k)=-k$, we recover the origami $\widetilde{L}$. 

Note that this ``symmetry'' exchanges the roles of the directions $(3,-1)$ and $(-1,3)$ in the origami $\widetilde{L}$. In particular, the eight cylinders $\ast+$, $\ast-$ with $\ast\in\{\alpha,\beta,\gamma,\delta\}$ of $\widetilde{L}$ in the direction $(3,-1)$ are associated under this ``symmetry'' to eight cylinders $\ast+$, $\ast-$ with $\ast\in\{\overline{\alpha}, \overline{\beta}, \overline{\gamma}, \overline{\delta}\}$. 

The matrix $\underline{C}=\left(\begin{array}{cc} -8 & -3 \\ 27 & 10 \end{array}\right)$ deduced from $\underline{B}$ by conjugation with $\underline{R}$ belongs to the Veech group $SL(\widetilde{L})$. Denote by $C$ the action on homology of the affine homeomorphism with linear part $\underline{C}$ and fixing pointwise $\Sigma$.

For each $\ast\in\{\overline{\alpha}, \overline{\beta}, \overline{\gamma}, \overline{\delta}\}$, let $\rho_{\ast+}$, resp. $\rho_{\ast-}$, be the absolute homology classes of the waist curves of the cylinder $\ast+$, resp. $\ast-$, so that the cycles $\widehat{\rho}_{\ast}:=\rho_{\ast+} - \rho_{\ast-}$ belong to $W_{\chi_2}$. 

By ``symmetry'', we deduce from the analogous formulas for $B$ that the action of $C$ on $W_{\chi_2}$ is:

\begin{eqnarray*}
& & C(\widehat{\zeta}_1) = \widehat{\zeta}_1 + \widehat{\rho}_{\overline{\beta}}, C(\widehat{\zeta}_j) = \widehat{\zeta}_j + \widehat{\rho}_{\overline{\delta}}, \\ 
& & C(\widehat{\zeta}_i) = \widehat{\zeta}_i - \widehat{\rho}_{\overline{\alpha}}, C(\widehat{\zeta}_k) = \widehat{\zeta}_k + \widehat{\rho}_{\overline{\gamma}},  \nonumber \\
& & C(\widehat{\sigma}_1) = \widehat{\sigma}_1 - \widehat{\rho}_{\overline{\beta}}, C(\widehat{\sigma}_j) = \widehat{\sigma}_j - \widehat{\rho}_{\overline{\delta}}, \nonumber \\ 
& & C(\widehat{\sigma}_i) = \widehat{\sigma}_i + \widehat{\rho}_{\overline{\alpha}}, C(\widehat{\sigma}_k) = \widehat{\sigma}_k - \widehat{\rho}_{\overline{\gamma}}, \nonumber \\ 
& & C(\widehat{\nu}_1) = \widehat{\nu}_1 + \widehat{\rho}_{\overline{\alpha}}, C(\widehat{\nu}_j) = \widehat{\nu}_j + \widehat{\rho}_{\overline{\gamma}}, \nonumber \\ 
& & C(\widehat{\mu}_1) = \widehat{\mu}_1 + \widehat{\rho}_{\overline{\alpha}} + \widehat{\rho}_{\overline{\beta}} + \widehat{\rho}_{\overline{\gamma}}, C(\widehat{\mu}_i) = \widehat{\mu}_i - \widehat{\rho}_{\overline{\alpha}} + \widehat{\rho}_{\overline{\beta}} - \widehat{\rho}_{\overline{\delta}}. \nonumber
\end{eqnarray*}
where 

\begin{eqnarray*}
\widehat{\rho}_{\overline{\alpha}} & = &  - \widehat{\zeta}_1+ \widehat{\sigma}_1 - \widehat{\sigma}_j + \widehat{\sigma}_i - \widehat{\nu}_1 - \widehat{\nu}_j + \widehat{\mu}_1 + \widehat{\mu}_i \\ 
\widehat{\rho}_{\beta} & = & \widehat{\zeta}_1 - \widehat{\zeta}_j + \widehat{\zeta}_k - \widehat{\sigma}_1 - \widehat{\sigma}_i - \widehat{\sigma}_k + \widehat{\nu}_1 - \widehat{\nu}_j - \widehat{\mu}_1 - \widehat{\mu}_i \nonumber \\ 
\widehat{\rho}_{\gamma} & = &  \widehat{\zeta}_j + \widehat{\sigma}_i + \widehat{\nu}_1 + \widehat{\nu}_j -  \widehat{\mu}_1 + \widehat{\mu}_i \nonumber \\
\widehat{\rho}_{\delta} & = & \widehat{\zeta}_1 - \widehat{\zeta}_j + \widehat{\zeta}_i + \widehat{\sigma}_1 - 2\widehat{\sigma}_i + \widehat{\nu}_1 - \widehat{\nu}_j +\widehat{\mu}_1 - \widehat{\mu}_i \nonumber 
\end{eqnarray*}

Therefore, the matrix of $C$ in the basis $\mathcal{B}$ is:

\begin{equation}\label{e.C} 
C=\left(\begin{array}{cccccccccccc}
2 & 1 & -1 & 0 & -1 & -1 & 1 & 0 & 0 & -3 & 1 & 0 \\
1 & 2 & 2 & -1 & -1 & -1 & -2 & 1 & 1 & 0 & 1 & 1 \\
0 & -1 & 1 & 0 & 0 & 1 & 0 & 0 & -1 & 1 & -1 & 0 \\
1 & 0 & 0 & 1 & -1 & 0 & 0 & 0 & -1 & -1 & 0 & 0 \\
-1 & -1 & -1 & 0 & 2 & 1 & 1 & 0 & 0 & 1 & -1 & 0 \\
0 & 0 & -1 & 0 & 0 & 1 & 1 & 0 & 0 & -1 & 0 & 0 \\
1 & 0 & 1 & -1 & -1 & 0 & 0 & 1 & 0 & 0 & 0 & 1 \\
-1 & 0 & 0 & 0 & 1 & 0 & 0 & 1 & 1 & 1 & 0 & 0 \\
1 & 1 & -1 & 1 & -1 & -1 & 1 & -1 & 0 & -3 & 1 & -1 \\
1 & 1 & 1 & -1 & -1 & -1 & -1 & 1 & 1 & 0 & 1 & 1 \\
-1 & -1 & -1 & -1 & 1 & 1 & 1 & 1 & 1 & 1 & 0 & 1 \\
1 & -1 & 1 & -1 & -1 & 1 & -1 & 1 & -1 & 1 & -1 & 2
\end{array}\right) 
\end{equation}

\subsection{End of proof of \autoref{t.FFM2}} By \autoref{t.FFM1}, the proof of \autoref{t.FFM2} is complete once we show that $W_{\chi_2}$ can not be decomposed as 
$$W_{\chi_2} = U\oplus V$$
for some symplectic $\textrm{Aff}_{***}(\widetilde{L})$-invariant $\textrm{Aut}(\widetilde{L})$-submodules $U\simeq 2\chi_2$ and $V\simeq\chi_2$, where $\textrm{Aff}_{***}(\widetilde{L})$ is a finite-index subgroup of $\textrm{Aff}(\widetilde{L})$.

Suppose by contradiction that $W_{\chi_2}$ admits such a decomposition $W_{\chi_2}=U\oplus V$. Recall from Subsection \ref{ss.dichotomy} that the action of $\textrm{Aff}_{***}(\widetilde{L})$ on $V\simeq \chi_2$ occurs through a subgroup of the compact group $SO^*(2)$. Thus, $V$ is the central eigenspace of the action of any element of $\textrm{Aff}_{***}(\widetilde{L})$ on $W_{\chi_2}$ whose spectrum is ``simple'' (i.e., it has three eigenvalues of multiplicity four whose moduli are distinct). 

Hence, we reach a contradiction if there are two elements of $\textrm{Aff}_{***}(\widetilde{L})$ acting on $W_{\chi_2}$ with simple spectrum such that their central eigenspaces are distinct.  

We claim that, for some $k, l\in\mathbb{N}$, the matrices $(A.B)^k$ and $(C.B)^l$ associated to the actions on $W_{\chi_2}$ of the powers $(\underline{A}\circ\underline{B})^k$ and $(\underline{C}\circ\underline{B})^l$ of the affine homeomorphisms $\underline{A}\circ\underline{B}$ and $\underline{C}\circ\underline{B}$ have the desired properties. 

Indeed, a direct computation with \eqref{e.A2} and \eqref{e.B2} shows that the matrix 
\begin{equation*}
A.B=\left(\begin{array}{cccccccccccc}
5 & 2 & 0 & 2 & -2 & -4 & -2 & -2 & -2 & -2 & -1 & 1 \\
2 & 1 & 2 & 0 & 0 & -2 & -2 & 2 & -2 & 2 & -1 & 1 \\
-2 & 2 & 1 & 0 & 2 & 0 & 0 & 0 & 0 & 0 & -1 & -3 \\
-2 & -2 & 0 & 1 & 0 & 2 & 0 & 0 & 0 & 0 & -1 & 1 \\
-4 & 0 & 0 & -2 & 3 & 2 & 2 & 2 & 2 & 2 & 1 & -3 \\
0 & 0 & 0 & 2 & 0 & 1 & -2 & -2 & -2 & -2 & -3 & 1 \\
-2 & -4 & 2 & -2 & 0 & 4 & 1 & 4 & 0 & 4 & 1 & 3 \\
0 & -2 & 0 & 0 & -2 & 2 & 0 & 1 & 0 & 0 & 1 & 3 \\
4 & 2 & -2 & 0 & -2 & -4 & 0 & -2 & 1 & -2 & 2 & 0 \\
2 & 0 & 4 & -2 & 0 & -2 & -2 & 4 & -2 & 5 & 0 & 2 \\
-4 & 0 & -2 & -2 & 2 & 2 & 4 & 0 & 4 & 0 & 3 & -4 \\
0 & -4 & 2 & -2 & -2 & 2 & 0 & 4 & 0 & 4 & 2 & 5
\end{array}\right) 
\end{equation*}
has characteristic polynomial 
\begin{eqnarray*}
P_{AB}(x) &=& x^{12} - 28 x^{11} + 322 x^{10} -  1964 x^9 + 6895 x^8 - 14392 x^7 \\ &+& 18332 x^6  -14392 x^5 +6895 x^4 -1964 x^3 +322 x^2 -28 x + 1 \\ &=&(x-1)^4 (x^2-6x+1)^4
\end{eqnarray*}
In particular, $A.B$ has three eigenvalues (each of them with multiplicity four), namely,  $3+2\sqrt{2}$, $1$ and $3-2\sqrt{2}$, so that $A.B$ has ``simple'' spectrum. Furthermore, it is not hard to see that the central eigenspace $V_{AB}$  of $A.B$ (associated to the eigenvalue $1$) is spanned by the following four vectors:
$$v_{AB}^{(1)} = (-1, 1, -1, 1, 1, -1, 1, 1, 0, 0, 0, 2),$$
$$v_{AB}^{(2)} = (-1, -1, 1, 1, 1, -1, -1, -1, 0, 0, 2, 0),$$
$$v_{AB}^{(3)} = (0, 0, 0, 0, 0, 0, 0, -1, 0, 1, 0, 0),$$
$$v_{AB}^{(4)} = (0, 0, 0, 0, 0, 0, -1, 0, 1, 0, 0, 0)$$

Similarly, a immediate calculation with \eqref{e.B2} and \eqref{e.C} reveals that the matrix 
\begin{equation*}
C.B=\left(\begin{array}{cccccccccccc}
5 & 2 & -4 & 2& -4& -2& 4& -2& 3& -5& 5& -1 \\
4& 7& 4& -2& -4& -6& -4& 2& -1& 1& 3& 1 \\
0& -2& 3& 0& 0& 2& -2& 0& -3& 1& -3& 3 \\
2& 0& 0& 3& -2& 0& 0& -2& -1& -3& -1& 1 \\
-4& -4& -2& -2& 5& 4& 2& 2& 1& 3& -1& -1 \\
0& -2& -2& 0& 0& 3& 2& 0& 1& -1& 1& 1 \\
4& 0& 2& -2& -4& 0& -1& 2& -1& 1& 3& 5 \\
-2& 0& 0& -2& 2& 0& 0& 3& 1& 3& 1& -1 \\
2& 2& -4& 4& -2& -2& 4& -4& 3& -6& 2& -4 \\
4& 4& 2& -2& -4& -4& -2& 2& 0& 1& 4& 2 \\
-2& -4& -2& -4& 2& 4& 2& 4& 2& 4& 3& 2 \\
4& -2& 4& -2& -4& 2& -4& 2& -4& 2& 0& 9
\end{array}\right) 
\end{equation*}
has characteristic polynomial 
\begin{eqnarray*}
P_{CB}(x) &=& x^{12} - 44 x^{11} + 770 x^{10} - 6780 x^9 + 31471 x^8 - 76120 x^7 \\ &+& + 101404 x^6  - 76120 x^5 + 31471 x^4 - 6780 x^3 + 770 x^2 - 44 x + 1 \\ &=& (x-1)^4 (x^2-10x+1)^4
\end{eqnarray*}
In particular, $C.B$ has three eigenvalues (each of them with multiplicity four), namely,  $5+2\sqrt{6}$, $1$ and $5-2\sqrt{6}$, so that $C.B$ also has ``simple'' spectrum. Furthermore, it is not hard to see that the central eigenspace $V_{CB}$  of $C.B$ (associated to the eigenvalue $1$) is spanned by the following four vectors:
$$v_{CB}^{(1)} = (0, 0, 0, 1, 0, 0, 0, 1, 0, 0, 0, 0),$$
$$v_{CB}^{(2)} = (0, 0, 1, 0, 0, 0, 1, 0, 0, 0, 0, 0),$$
$$v_{CB}^{(3)} = (0, 1, 0, 0, 0, 1, 0, 0, 0, 0, 0, 0),$$
$$v_{CB}^{(4)} = (1, 0, 0, 0, 1, 0, 0, 0, 0, 0, 0, 0)$$

It follows that $V_{AB}$ and $V_{CB}$ are distinct (and, actually, $\left\{v_{AB}^{(n)}, v_{CB}^{(m)}\right\}_{1\leq n, m\leq 4}$ span a $8$-dimensional subspace). Moreover, the same properties are true for any powers $(A.B)^k$ and $(C.B)^l$: the matrices $(A.B)^k$ and $(C.B)^l$ have ``simple spectrum'' and $V_{(A.B)^k} = V_{A.B}$ and $V_{(C.B)^l} = V_{C.B}$ are distinct for all $k, l\in\mathbb{N}$. By taking $k, l\in \mathbb{N}$ so that $(\underline{A}\circ\underline{B})^k$ and $(\underline{C}\circ\underline{B})^l$ belong to $\textrm{Aff}_{***}(\widetilde{L})$ (this is always possible because $\textrm{Aff}_{***}(\widetilde{L})$ is a finite-index subgroup of $\textrm{Aff}(\widetilde{L})$), we conclude that there is no decomposition $W_{\chi_2} = U\oplus V$ with the features described in the beginning of this subsection. This completes the proof of \autoref{t.FFM2}. \hfill \qed

\section*{Acknowledgments}

S.F. is grateful to the other two authors for interesting discussions and questions on this topic.
He is also grateful to Alex Eskin and Anton Zorich for discussions related to this topic.

C.M. is thankful to Jean-Christophe Yoccoz for some discussions related to \cite{MYZ}. 

C.M. was partially supported by the French ANR grant ``GeoDyM'' (ANR-11-BS01-0004) and by the Balzan Research Project of J. Palis.

\end{document}